\providecommand{\U}[1]{\protect \rule{.1in}{.1in}}
\newtheorem{theorem}{Theorem}
\newtheorem{corollary}[theorem]{Corollary}
\newtheorem{definition}[theorem]{Definition}
\newtheorem{example}[theorem]{Example}
\newtheorem{proposition}[theorem]{Proposition}
\newtheorem{remark}[theorem]{Remark}
\newenvironment{proof}[1][Proof]{\noindent \textbf{#1.} }{\  \rule{0.5em}{0.5em}}
\begin{document}

\begin{center}
Generalized convex functions and their applications in optimality conditions

\bigskip

Mohammad Hossein Alizadeh, Alireza Youhannaee Zanjani

\bigskip

Department of Mathematics, Institute for Advanced Studies in Basic Sciences

(IASBS), 45137-66781 Zanjan, Iran

Email: m.alizadeh@iasbs.ac.ir

Email: alireza.y.z@iasbs.ac.ir

\bigskip

Dedicated to Professor Nicolas Hadjisavvas on the occasion of his 70th
birthday\bigskip
\end{center}

\textbf{Abstract: }We introduce and study the notion of $(e,y)$-conjugate for
a proper and $e$-convex function in locally convex spaces, which is an
extension of the concept of the conjugate. The mutual relationships between
the concepts of $(e,y)$-conjugacy and $e$-subdifferential are presented.
Moreover, some applications of these notions in optimization are established.

\bigskip

\textbf{Keywords:} generalized convex functions, generalized conjugacy,
generalized subdifferentials.

\textbf{2010 Mathematics Subject Classification:} 47H05,49J53, 46B10, 26A27, 26A48.

\section{Introduction}

Several extensions of the convex functions and their relations with their
generalized subdifferentials have been considered in the literature, like
$\epsilon$-convex functions, $\epsilon$-subdifferentials and $\epsilon
$-monotonicity (see \cite{Jo-Luc-MI} and the references therein), and also
$\sigma$-convex functions, $\sigma$-subdifferentials, and $\sigma
$-monotonicity (see \cite{A-javad, MHA-You, Ali 2021, Ali 2018, AHR2012,
Hui-Sun, IKS}). There are many relationships between the convexity of a
function and the monotonicity of its subdifferential. For more details, we
refer to the books of Zalinescu \cite{Zalin. 02} and Clarke \cite{Clarke} and
to the papers \cite{ACL, HLZ, Rock}. Another of these generalizations, which
is the one of interest in this paper, is the notion of $e$-convexity, which
was introduced in \cite{ALi-Mako}, where connections between Hermite--Hadamard
type inequalities and $e$-convexity were established. Furthermore, the notions
of $e$-convexity, $e$-subdifferentials and$\ e$-monotonicity in locally convex
spaces, are studied in \cite{Ali 2022}. The notion of $e$-convexity reduces to
the generalizations that were mentioned earlier here, provided that the
relevant error function is well chosen (see \cite[Remark 1]{ALi-Mako}).

In this stage, we would like to bring to the attention of the readers the
presence of other categories of $E$\emph{-convex and }$e$\emph{-convex
functions} (detailed information on these can be found in \cite{Goberna,
Youness} and the associated literature). We mention that $E$-convexity is an
interesting concept, and underscore its distinction as a separate concept (see
\cite{Youness} and related papers). Also, it is worth noting that, after
publishing \cite{Ali 2022}, Professor Goberna informed the first author by
e-mail about the existence of another class of $e$-convex functions that
already received the same name in abbreviated form (see Chapter 4 of the book
\cite{Goberna}). Of course, their "$e$" is the initial of "evenly" while our
"$e$" is the initial of "error" so both concepts are independent of each
other, but we are afraid that this duplicity could confuse in future to many
readers. To avoid them, we inform the readers about the existence of some
other types of $e$-convex functions.\newline

Without much difficulty, one can observe that the concept of the $e$-convex
function differs from other generalized classes of convex functions, such as
pseudo-convex, and quasi-convex functions. For instance, the function
$f(x)=-x^{2}$ is $e$-convex (see example \ref{Not sigma-conv}, below) with
error function $e(x,y)=(x-y)^{2}$ that is neither pseudo-convex nor
quasi-convex. Indeed, if $f$ satisfied the definition of pseudo-convexity,
then we would expect to have the following:%
\[
f^{\prime}\left(  0\right)  \left(  y-0\right)  \geq0\implies f\left(
y\right)  \geq f\left(  0\right)  \  \  \  \forall y\in%
\mathbb{R}
\text{.}%
\]
Based on the above implication, it can be deduced that for every $y\in%
\mathbb{R}
$ the function $f\left(  y\right)  =-y^{2}\geq0,$ which is a contradiction. To
demonstrate that $f$ is not quasi-convex, consider the values $x=-y=1$ and
$\lambda=\frac{1}{2}$, then the definition of quasi-convexity is not satisfied.

We are uncertain about the relationship between $e$-convex functions and
prox-regular functions (see \cite{P-Roc. 96}): More precisely, the following
question arises:\newline \textbf{Question 1}: \emph{Do }$e$\emph{-convex
functions more general than prox-regular functions?}\newline

The concept of $q$-positivity, which was introduced and explored in
\cite{Gars-Martin-Sim, Mleg}, appears to have a close connection with
$e$-conjugacy. Naturally, the forthcoming question is posed:\newline%
\textbf{Question 2}: \emph{How to relate }$e$\emph{-conjugate to }%
$q$\emph{-positivity?}\newline Indeed we feel to answer the questions $1$, $2$
in detail requires writing another paper.

Besides, another question arises here: Do some well-known results from the
literature regarding conjugacy and subdifferential remain true for
$e$-conjugacy and $e$-subdifferential? In this paper, we give affirmative
answers to this question.\newline Actually, we derive a characterization of
$e$-convex functions, and then we establish a connection between the
$e$-convex function to its Dini-derivatives. Besides, we make a link to
Lipschitz functions. The fact that every global Lipschitz function is also
$e$-convex demonstrates the substantial size of the class of $e$-convex
functions, since Lipschitz functions are very different from convex functions.
Moreover, $e$-conjugate calculus, $e$-subdifferential characterization, and
optimization properties are presented.

We organized the paper as follows. In Section 2, we present some notations and
concepts that are essential for the subsequent sections. In Section 3, various
properties and characterizations of $e$-convex functions are derived. Besides,
it is shown that each Lipschitz function is $e$-convex. The concept of
$e$-conjugacy is introduced and studied in Section 4, where several properties
of this notion and $e$-convex functions are presented. In Section 5, some
applications of the $e$-convexity in optimization are given.

\section{Preliminaries and notations}

Throughout this paper, $X$ and $Y$\ are separated locally convex topological
vector space if not stated explicitly otherwise. We denote the topological
dual space of $X$ by $X^{\ast}$ and duality pairings by $\left \langle
\cdot,\cdot \right \rangle $. We will use the following convention%
\begin{equation}
(+\infty)+(-\infty)=(-\infty)+(+\infty)=-\infty. \label{Convention}%
\end{equation}
For $\Omega \subset X$, we denote by $\operatorname*{bd}\left(  \Omega \right)
$ the boundary points of $\Omega$ and by $\operatorname*{co}\Omega$ the convex
hull of $\Omega$. It is well known that the class $\mathcal{N}_{0}$ of the
closed, convex and balanced neighborhoods of the origin is a base of
neighborhoods of $0$. We use $\mathcal{N}_{x}$\ as a neighborhood base for $x$
(for more detail see \cite{Zalin. 02}).

Let $T$ be a set-valued map\ from $X$ to $X^{\ast}$. The \emph{domain} and
\emph{graph} of $T$ are, respectively, defined by%
\[
D\left(  T\right)  =\left \{  x\in X:T\left(  x\right)  \neq \emptyset \right \}
,
\]%
\[
\operatorname*{gr}T=\left \{  \left(  x,x^{\ast}\right)  \in X\times X^{\ast
}:x\in D\left(  T\right)  ,\text{ and\ }x^{\ast}\in T\left(  x\right)
\right \}  \text{.}%
\]

We recall that $T$ is \emph{monotone} if%
\[
\langle x-y,x^{\ast}-y^{\ast}\rangle \geq0
\]
for all $x,y\in X$ and $x^{\ast}\in T\left(  x\right)  ,y^{\ast}\in T\left(
y\right)  $. A monotone operator is called \emph{maximal monotone} if it has
no monotone extension other than itself.

Let $f:X\rightarrow%
\mathbb{R}
\cup \left \{  +\infty \right \}  $ be an extended real-valued function. Its
\emph{domain} (or effective domain) and \emph{epigraph}, respectively are
defined by $\operatorname*{dom}f:=\left \{  x\in X:f\left(  x\right)
<+\infty \right \}  ,\ $and\ $\operatorname*{epi}f:=\left \{  \left(
x,\lambda \right)  \in X\times%
\mathbb{R}
:f\left(  x\right)  \leq \lambda \right \}  $. The function $f$ is called
\emph{proper} if $\operatorname*{dom}f\neq \emptyset$. Let $f:X\rightarrow%
\mathbb{R}
\cup \left \{  +\infty \right \}  $ be a proper function. The
\emph{subdifferential} of $f$ at $x\in \operatorname*{dom}f$\ (in the sense of
Convex Analysis)\ is defined by%
\begin{equation}
\partial f\left(  x\right)  =\left \{  x^{\ast}\in X^{\ast}:\left \langle
y-x,x^{\ast}\right \rangle \leq f\left(  y\right)  -f\left(  x\right)
\qquad \forall y\in X\right \}  , \label{subdiff-conana}%
\end{equation}
and $\partial f\left(  x\right)  $ is empty if $x$ is not in the domain of
$f$.\newline For a proper function $f:X\rightarrow%
\mathbb{R}
\cup \left \{  +\infty \right \}  $ the \emph{Clarke-Rockafellar generalized
directional derivative} at $x$ in a direction $z\in X$ is defined by%
\[
f^{\uparrow}\left(  x,z\right)  =\sup_{\delta>0}\limsup_{(y,\alpha)\overset
{f}{\rightarrow}x,\lambda \searrow0}\inf_{u\in B\left(  z,\delta \right)  }%
\frac{f\left(  y+\lambda u\right)  -\alpha}{\lambda}%
\]
where $(y,\alpha)\overset{f}{\rightarrow}x$ means that $y\rightarrow
x,\alpha \rightarrow f\left(  x\right)  $ and $\alpha \geq f\left(  y\right)  $.
The Clarke-Rockafellar subdifferential \cite{Clarke, Zalin. 02} of $f$ at
$x\in \operatorname*{dom}f$ is defined by
\[
\partial^{CR}f\left(  x\right)  =\left \{  x^{\ast}\in X^{\ast}:\langle
x^{\ast},z\rangle \leq f^{\uparrow}\left(  x,z\right)  \quad \forall z\in
X\right \}  \text{.}%
\]

We recall that \cite{Ali 2018}, for a given function $f:X\rightarrow%
\mathbb{R}
\cup \left \{  +\infty \right \}  $ and a map $\sigma:X\rightarrow%
\mathbb{R}
_{+}\cup \left \{  +\infty \right \}  $, such that $\operatorname*{dom}%
f\subseteq \operatorname*{dom}\sigma$, we say that $f$ is $\sigma
$\emph{-convex} if%
\[
f\left(  tx+\left(  1-t\right)  y\right)  \leq tf\left(  x\right)  +\left(
1-t\right)  f\left(  y\right)  +t\left(  1-t\right)  \min \{ \sigma
(x),\sigma(y)\}||x-y||
\]
for all $x,y\in X$, and $t\in]0,1[$.

The concept of $\sigma$-convex functions, which extends the $\epsilon$-convex
functions (see \cite{Jo-Luc-MI, NLM 97}), has been explored in \cite{Ali 2021,
Ali 2018, Hui-Sun}.

We recall that a bifunction $e:X\times X\rightarrow%
\mathbb{R}
\cup \left \{  +\infty \right \}  $ is an error function if $\operatorname*{dom}%
e:=\left \{  \left(  x,y\right)  \in X\times X:e\left(  x,y\right)
<+\infty \right \}  \neq \emptyset$, $e$\ is nonnegative, and also it is
symmetric, i.e., $e\left(  x,y\right)  =e\left(  y,x\right)  $. Throughout
this article $e$ will be an error function. The notion of $e$-convexity is
introduced and studied in \cite{ALi-Mako}.

\begin{definition}
\cite{ALi-Mako} Let a function $f:X\rightarrow%
\mathbb{R}
\cup \left \{  +\infty \right \}  $ and an error function $e$ such that
$\operatorname*{dom}f\times \operatorname*{dom}f\subseteq \operatorname*{dom}e$
be given. Then $f$ is called $e$\emph{-convex} if
\begin{equation}
f\left(  tx+\left(  1-t\right)  y\right)  \leq tf\left(  x\right)  +\left(
1-t\right)  f\left(  y\right)  +t\left(  1-t\right)  e\left(  x,y\right)
\label{convexineq}%
\end{equation}
for all $x,y\in X$, and $t\in]0,1[$.
\end{definition}

Consider an operator $T:X\rightarrow2^{X^{\ast}}$ and an error function $e$,
such that $D\left(  T\right)  \times D\left(  T\right)  \subset
\operatorname*{dom}e$. Then $T$ is said to be $e$\emph{-monotone} if for every
$x,y\in D(T),\ x^{\ast}\in T\left(  x\right)  $ and $y^{\ast}\in T\left(
y\right)  $,%
\begin{equation}
\langle x-y,x^{\ast}-y^{\ast}\rangle \geq-e\left(  x,y\right)  . \label{sigma1}%
\end{equation}

In \cite{Ali 2022}, the concept of $e$-subdifferential is presented and
analyzed. We recall it here:

Suppose that $f:X\rightarrow%
\mathbb{R}
\cup \left \{  +\infty \right \}  $ is a proper function. The $e$-subdifferential
of $f$ is the multivalued operator $\partial^{e}f:X\rightarrow2^{X^{\ast}}$
defined by%
\[
\partial^{e}f\left(  x\right)  :=\left \{  x^{\ast}\in X^{\ast}:\langle
z,x^{\ast}\rangle \leq f\left(  x+z\right)  -f\left(  x\right)  +e(x+z,x)\quad
\forall z\in X\right \}
\]
\newline if $x\in \operatorname*{dom}f;$ otherwise it is empty. Equivalently,
\[
\partial^{e}f\left(  x\right)  =\left \{  x^{\ast}\in X^{\ast}:\left \langle
y-x,x^{\ast}\right \rangle \leq f\left(  y\right)  -f\left(  x\right)
+e(x,y)\quad \forall y\in X\right \}  .
\]
For the convenience of the readers regarding Question 1, we recall some
related subjects from \cite{P-Roc. 96}, a function $f:%
\mathbb{R}
^{n}\rightarrow \overline{%
\mathbb{R}
}$ is primal-lower-nice (p.l.n.) at $\overline{x}$, a point where $f$ is
finite, if there exist $R>0,c>0$ and $\epsilon>0$ with the property that%
\[
f\left(  x^{\prime}\right)  >f\left(  x\right)  +\left \langle v,x^{\prime
}-x\right \rangle -\frac{r}{2}\left \Vert x^{\prime}-x\right \Vert ^{2},
\]
whenever $r>R$, $\left \Vert v\right \Vert <cr,v\in \partial f\left(  x\right)
,\left \Vert \overline{x}-x^{\prime}\right \Vert <\epsilon$ with $x^{\prime}\neq
x$ and $\left \Vert x-\overline{x}\right \Vert <\epsilon$.

\section{Some properties of $e$-convex functions}

Some properties and characterizations regarding the $e$-convex functions are
studied \cite{Ali 2022, ALi-Mako}. In the following proposition, we will
present another characterization of $e$-convex functions in locally convex
spaces. Indeed, Huang and Sun in \cite{Hui-Sun} presented two
characterizations of $\sigma$-convex functions, and in the next proposition,
we generalized their result.

\begin{proposition}
Let a function $f:X\rightarrow%
\mathbb{R}
\cup \left \{  +\infty \right \}  $ and an error function $e$ such that
$\operatorname*{dom}f\times \operatorname*{dom}f\subset \operatorname*{dom}e$ be
given. Then the following statements are equivalent:

(i) $f$ is $e$-convex;

(ii) for each $x,y\in X$ and for all $r,s,t\in%
\mathbb{R}
$ such that $x+sy\in \operatorname*{dom}f$%
\begin{equation}
\frac{f\left(  x+sy\right)  -f\left(  x+ry\right)  }{s-r}\leq \frac{f\left(
x+ty\right)  -f\left(  x+sy\right)  }{t-s}+\frac{1}{t-r}e\left(
x+ty,x+ry\right)  \label{char 1}%
\end{equation}

whenever $r<s<t$.

(iii) for every $x,y\in X$ and for all $s,t\in%
\mathbb{R}
$%
\begin{equation}
\frac{f\left(  x+sy\right)  -f\left(  x\right)  }{s}\leq \frac{f\left(
x+ty\right)  -f\left(  x\right)  }{t}+\left(  \frac{t-s}{t^{2}}\right)
e\left(  x+ty,x\right)  \label{char 2}%
\end{equation}

whenever $0<s<t$.
\end{proposition}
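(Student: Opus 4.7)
My plan is to establish the cycle (i)$\Rightarrow$(ii)$\Rightarrow$(iii)$\Rightarrow$(i). Each implication reduces, after rearrangement, to the same three-slopes relation, so the real content lies in choosing the right convex combination and then book-keeping the constants.

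First, for (i)$\Rightarrow$(ii), I would use that when $r<s<t$,
\[
x+sy \;=\; \tfrac{t-s}{t-r}(x+ry)+\tfrac{s-r}{t-r}(x+ty),
\]
a convex combination with coefficient $\lambda:=(t-s)/(t-r)\in(0,1)$ and $\lambda(1-\lambda)=(t-s)(s-r)/(t-r)^{2}$. Plugging these values and the two points $x+ry,\,x+ty$ into the defining inequality (\ref{convexineq}) yields
\[
f(x+sy)\le \tfrac{t-s}{t-r}f(x+ry)+\tfrac{s-r}{t-r}f(x+ty)+\tfrac{(t-s)(s-r)}{(t-r)^{2}}\,e(x+ry,x+ty).
\]
Multiplying by $(t-r)$, rewriting $(t-r)f(x+sy)=(t-s)f(x+sy)+(s-r)f(x+sy)$, and then dividing by $(s-r)(t-s)$ transforms this into (\ref{char 1}); the symmetry of $e$ handles the swap of arguments. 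Cases where $x+ry$ or $x+ty$ lies outside $\operatorname{dom}f$ are trivial under the convention (\ref{Convention}).

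For (ii)$\Rightarrow$(iii), I would simply specialize (\ref{char 1}) to $r=0$, giving
\[
\tfrac{f(x+sy)-f(x)}{s}\le \tfrac{f(x+ty)-f(x+sy)}{t-s}+\tfrac{1}{t}\,e(x+ty,x),
\]
and then eliminate $f(x+sy)$ from the right-hand slope. Writing $A=f(x),\,B=f(x+sy),\,C=f(x+ty),\,E=e(x+ty,x)$ and clearing denominators by $st(t-s)>0$ reduces both sides to $t^{2}B\le t(t-s)A+stC+s(t-s)E$, which after division by $st$ is exactly (\ref{char 2}).

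Finally, for (iii)$\Rightarrow$(i), given $u,v\in X$ and $\theta\in(0,1)$, I would apply (\ref{char 2}) with $x:=v$, $y:=u-v$, $s:=\theta$, $t:=1$; then $x+sy=\theta u+(1-\theta)v$ and $x+ty=u$, so (\ref{char 2}) reads
\[
\tfrac{f(\theta u+(1-\theta)v)-f(v)}{\theta}\le f(u)-f(v)+(1-\theta)\,e(u,v),
\]
and multiplication by $\theta$ recovers the $e$-convexity inequality (\ref{convexineq}) at the pair $u,v$.

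The only delicate point is purely algebraic: tracking how the coefficient attached to the error term transforms between the three equivalent forms (namely $\lambda(1-\lambda)$ in (i), $1/(t-r)$ in (ii), and $(t-s)/t^{2}$ in (iii)). No topological or functional-analytic obstacle arises; the entire proof is essentially an exercise in identifying one three-slopes inequality under three different reparametrizations.
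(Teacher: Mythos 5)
Your proof is correct and takes essentially the same route as the paper's: the same convex-combination decomposition of $x+sy$ between $x+ry$ and $x+ty$ for (i)$\Rightarrow$(ii), the specialization $r=0$ for (ii)$\Rightarrow$(iii), and the substitution $t=1$, $y=u-v$ (the paper's $y=z-x$) for (iii)$\Rightarrow$(i). The only blemish is a trivial bookkeeping slip in the middle step: recovering (\ref{char 2}) from the cleared form $t^{2}B\leq t(t-s)A+stC+s(t-s)E$ requires dividing by $st^{2}$, not by $st$.
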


\begin{proof}
To prove (i)$\implies$(ii), put $\lambda:=\frac{s-r}{t-r}$. Then $\lambda
\in]0,1[$ and $1-\lambda=\frac{t-s}{t-r}$. In addition, by a simple
calculation, we obtain%
\begin{equation}
x+sy=\lambda(x+ty)+\left(  1-\lambda \right)  \left(  x+ry\right)  \text{.}
\label{calcu}%
\end{equation}
Using the $e$-convexity assumption and (\ref{calcu}) we get%
\begin{align*}
f\left(  x+sy\right)   &  =f\left(  \lambda(x+ty)+\left(  1-\lambda \right)
\left(  x+ry\right)  \right) \\
&  \leq \lambda f(x+ty)+\left(  1-\lambda \right)  f\left(  x+ry\right)
+\lambda \left(  1-\lambda \right)  e\left(  x+ty,x+ry\right)  .
\end{align*}
Replace the values of $\lambda=\frac{s-r}{t-r}$ and $1-\lambda=\frac{t-s}%
{t-r}$ in the above inequality and multiply both sides by $\left(  t-r\right)
$. Hence%
\begin{align*}
\left(  t-s\right)  \left(  f\left(  x+sy\right)  -f\left(  x+ry\right)
\right)   &  \leq \left(  s-r\right)  \left(  f(x+ty)-f\left(  x+sy\right)
\right) \\
&  +\frac{\left(  s-r\right)  \left(  t-s\right)  }{t-r}e\left(
x+ty,x+ry\right)  .
\end{align*}
Multiplying both sides of the above inequality by $\frac{1}{\left(
s-r\right)  \left(  t-s\right)  }$, we infer (\ref{char 1}).\newline To show
(ii)$\implies$(iii), if we take $r=0$ in (\ref{char 1}), then%
\[
\frac{f\left(  x+sy\right)  -f\left(  x\right)  }{s}\leq \frac{f\left(
x+ty\right)  -f\left(  x+sy\right)  }{t-s}+\frac{1}{t}e\left(  x+ty,x\right)
\text{.}%
\]
Now we multiply both sides of the previous inequality by $s\left(  t-s\right)
$, so%
\[
t\left(  f\left(  x+sy\right)  -f\left(  x\right)  \right)  \leq s\left(
f(x+ty)-f\left(  x\right)  \right)  +\frac{s\left(  t-s\right)  }{t}e\left(
x+ty,x\right)  .
\]
Multiply both sides of the above inequality by $\frac{1}{st}$ and then
simplify the resulting inequality we get (\ref{char 2}).\newline To prove
(iii)$\implies$(i), let $s\in]0,1[$. If we put $t=1$ and $y=z-x$ in
(\ref{char 2}), then%
\[
\frac{f\left(  sz+\left(  1-s\right)  x\right)  -f\left(  x\right)  }{s}\leq
f\left(  z\right)  -f\left(  x\right)  +\left(  1-s\right)  e\left(
z,x\right)  \text{.}%
\]
This implies that $f\left(  sz+\left(  1-s\right)  x\right)  \leq sf\left(
z\right)  +\left(  1-s\right)  f\left(  x\right)  +s\left(  1-s\right)
e\left(  x,z\right)  $, i.e., $f$ is $e$-convex.
\end{proof}

In the following, we provide an example of an $e$-convex function such that it
is not $\sigma$-convex for any map $\sigma$.

\begin{example}
\label{Not sigma-conv}Consider the function $f:%
\mathbb{R}
\rightarrow%
\mathbb{R}
$ defined by $f(x)=-x^{2}$. Then $f$ is $e$-convex with error function
$e(x,y)=(x-y)^{2}$ and it is not $\sigma$-convex for any $\sigma:%
\mathbb{R}
\rightarrow%
\mathbb{R}
_{+}$. Note that here, for all $x,y\in%
\mathbb{R}
_{+}$ we have $e(x,y)=e(y,x)\geq0$ and $e(x,x)=0$.\newline First we show the
$e$-convexity of $f$. Let $x,y\in \operatorname*{dom}f=%
\mathbb{R}
$ and $t\in]0,1[$. Then%
\begin{align*}
&  f\left(  tx+\left(  1-t\right)  y\right)  -tf\left(  x\right)  -\left(
1-t\right)  f\left(  y\right) \\
&  =-\left(  tx+\left(  1-t\right)  y\right)  ^{2}+tx^{2}+\left(  1-t\right)
y^{2}\\
&  =t\left[  x^{2}-\left(  tx+\left(  1-t\right)  y\right)  ^{2}\right]
+\left(  1-t\right)  \left[  y^{2}-\left(  tx+\left(  1-t\right)  y\right)
^{2}\right] \\
&  =t\left(  1-t\right)  \left(  x-y\right)  \left[  \left(  1+t\right)
x+\left(  1-t\right)  y-tx+\left(  t-2\right)  y\right] \\
&  =t\left(  1-t\right)  \left(  x-y\right)  \left(  x-y\right)  =t\left(
1-t\right)  \left(  x-y\right)  ^{2}\text{.}%
\end{align*}

Now we prove that function $f(x)=-x^{2}$ is not $\sigma$-convex for any
$\sigma:%
\mathbb{R}
\rightarrow%
\mathbb{R}
_{+}$. Suppose, on the contrary, that there exists map $\sigma:%
\mathbb{R}
\rightarrow%
\mathbb{R}
_{+}$ such that $f$ is $\sigma$-convex. Then by definition for each $x,y\in%
\mathbb{R}
$ and $t\in]0,1[$ we have:%
\begin{equation}
f\left(  tx+\left(  1-t\right)  y\right)  \leq tf\left(  x\right)  +\left(
1-t\right)  f\left(  y\right)  +t\left(  1-t\right)  \overline{\sigma}\left(
x,y\right)  \left \vert x-y\right \vert , \label{1 in examp}%
\end{equation}
where $\overline{\sigma}\left(  x,y\right)  =\min \left \{  \sigma \left(
x\right)  ,\sigma \left(  y\right)  \right \}  $.\newline We can also assume
without loss of generality that $0<y<x$. It follows from (\ref{1 in examp})
that%
\[
\left[  f\left(  tx+\left(  1-t\right)  y\right)  -f\left(  y\right)  \right]
+t\left[  f\left(  y\right)  -f\left(  x\right)  \right]  \leq t\left(
1-t\right)  \overline{\sigma}\left(  x,y\right)  (x-y).
\]
Divide both sides of the above inequality by $t(1-t)(x-y)$ and use the
definition of $f$. Then we get%
\begin{align*}
\sigma \left(  y\right)   &  \geq \overline{\sigma}\left(  x,y\right)  \geq
\frac{-\left(  y+t(x-y)\right)  ^{2}+y^{2}}{t(1-t)(x-y)}+\frac{x^{2}-y^{2}%
}{(1-t)(x-y)}\\
&  =\frac{-t^{2}\left(  x-y\right)  ^{2}-2ty\left(  x-y\right)  }%
{t(1-t)(x-y)}+\frac{x+y}{1-t}=x-y\text{.}%
\end{align*}
Tending $x\rightarrow+\infty$, we get $\sigma \left(  y\right)  \geq+\infty$.
This is a contradiction.
\end{example}

In the subsequent result, we show that the product of any two convex functions
is an $e$-convex function.

\begin{proposition}
Suppose that $f,g:X\rightarrow%
\mathbb{R}
\cup \left \{  +\infty \right \}  $ are convex and nonnegative functions and
$\operatorname*{dom}f\cap \operatorname*{dom}g\neq \emptyset$. Then $f\times g$
is $e$-convex with $e\left(  x,y\right)  =f\left(  x\right)  g\left(
y\right)  +f\left(  y\right)  g\left(  x\right)  $ for all $x,y\in X$.
\end{proposition}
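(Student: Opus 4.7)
The strategy is purely algebraic: multiply the two convexity inequalities for $f$ and $g$ and then collect terms to match the required $e$-convexity inequality with the proposed error function. The domain compatibility condition $\operatorname{dom}(fg)\times\operatorname{dom}(fg)\subset\operatorname{dom}e$ is immediate once one notes $\operatorname{dom}(fg)=\operatorname{dom}f\cap\operatorname{dom}g$, on which all four values $f(x),f(y),g(x),g(y)$ are finite; so the substantive work is the pointwise inequality
\[
(fg)(tx+(1-t)y)\leq t(fg)(x)+(1-t)(fg)(y)+t(1-t)\bigl[f(x)g(y)+f(y)g(x)\bigr].
\]

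First, I would reduce to the case $x,y\in\operatorname{dom}f\cap\operatorname{dom}g$; outside that set the right-hand side is $+\infty$ (assuming the usual convention), so the inequality is trivial. For $x,y$ in the common domain, since $f,g\geq 0$ and both are convex, I can multiply
\[
f(tx+(1-t)y)\leq tf(x)+(1-t)f(y),\qquad g(tx+(1-t)y)\leq tg(x)+(1-t)g(y),
\]
termwise to obtain
\[
(fg)(tx+(1-t)y)\leq t^{2}f(x)g(x)+t(1-t)\bigl[f(x)g(y)+f(y)g(x)\bigr]+(1-t)^{2}f(y)g(y).
\]

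Finally, I would rewrite $t^{2}=t-t(1-t)$ and $(1-t)^{2}=(1-t)-t(1-t)$ so that the right-hand side becomes
\[
tf(x)g(x)+(1-t)f(y)g(y)+t(1-t)\bigl[f(x)g(y)+f(y)g(x)\bigr]-t(1-t)\bigl[f(x)g(x)+f(y)g(y)\bigr],
\]
and discard the last, manifestly nonpositive, term (using $t(1-t)\geq 0$ and $fg\geq 0$). This gives exactly the $e$-convexity inequality with $e(x,y)=f(x)g(y)+f(y)g(x)$, which is nonnegative and symmetric as required.

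There is no real obstacle here; the only subtle point is ensuring that the termwise multiplication of the convexity inequalities is valid, which relies crucially on the nonnegativity hypothesis (both sides of each inequality are nonnegative, since the convex combination of nonnegative values is nonnegative). Once that is observed, the argument is a two-line expansion.
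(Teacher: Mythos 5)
Your proposal is correct and follows essentially the same route as the paper: multiply the two convexity inequalities (justified by nonnegativity) and then drop the excess term, which is exactly the paper's use of $t^{2}f(x)g(x)\leq tf(x)g(x)$ and $(1-t)^{2}f(y)g(y)\leq(1-t)f(y)g(y)$ written via your decomposition $t^{2}=t-t(1-t)$, $(1-t)^{2}=(1-t)-t(1-t)$. Your explicit remarks on the domain reduction and on why the termwise multiplication is legitimate are fine additions but do not change the argument.
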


\begin{proof}
Let $x,y\in \operatorname*{dom}f\cap \operatorname*{dom}g$ and $t\in]0,1[$.Then%
\begin{align*}
&  (f\times g)\left(  tx+\left(  1-t\right)  y\right)  -t(f\times g)\left(
x\right)  -\left(  1-t\right)  (f\times g)\left(  y\right) \\
&  =f\left(  tx+\left(  1-t\right)  y\right)  g\left(  tx+\left(  1-t\right)
y\right)  -tf\left(  x\right)  g\left(  x\right)  -\left(  1-t\right)
f\left(  y\right)  g\left(  y\right) \\
&  \leq \left[  tf\left(  x\right)  +\left(  1-t\right)  f\left(  y\right)
\right]  \left[  tg\left(  x\right)  +\left(  1-t\right)  g\left(  y\right)
\right]  -tf\left(  x\right)  g\left(  x\right)  -\left(  1-t\right)  f\left(
y\right)  g\left(  y\right) \\
&  =t^{2}f\left(  x\right)  g\left(  x\right)  +t\left(  1-t\right)  f\left(
x\right)  g\left(  y\right)  +t\left(  1-t\right)  f\left(  y\right)  g\left(
x\right)  +\left(  1-t\right)  ^{2}f\left(  y\right)  g\left(  y\right) \\
&  \  \  \  \  \  \  \  \  \  \  \  \  \  \  \  \  \ -tf\left(  x\right)  g\left(  x\right)
-\left(  1-t\right)  f\left(  y\right)  g\left(  y\right) \\
&  \leq tf\left(  x\right)  g\left(  x\right)  +t\left(  1-t\right)  f\left(
x\right)  g\left(  y\right)  +t\left(  1-t\right)  f\left(  y\right)  g\left(
x\right)  +\left(  1-t\right)  f\left(  y\right)  g\left(  y\right) \\
&  \  \  \  \  \  \  \  \  \  \  \  \  \  \  \ -tf\left(  x\right)  g\left(  x\right)
-\left(  1-t\right)  f\left(  y\right)  g\left(  y\right) \\
&  =t\left(  1-t\right)  \left[  f\left(  x\right)  g\left(  y\right)
+f\left(  y\right)  g\left(  x\right)  \right]
\end{align*}
The proof is completed.
\end{proof}

The following proposition is an extension of the well-known result from the
literature (for instance, see \cite[Theorem 2.1.11]{Zalin. 02}).

\begin{proposition}
Let a function $f:X\rightarrow%
\mathbb{R}
\cup \left \{  +\infty \right \}  $ and an error function $e$ such that
$\operatorname*{dom}f\times \operatorname*{dom}f\subset \operatorname*{dom}e$ be
given. Suppose that $D\subset \operatorname*{dom}f$ is a nonempty, convex and
open set of $X$, and $f|_{D}$ is G\^{a}teaux differentiable. Consider the
following statements

(i) $f$ is $e$-convex;

(ii) for all $x,y\in D:\left \langle \nabla f\left(  x\right)
,y-x\right \rangle \leq f\left(  y\right)  -f\left(  x\right)  +e(x,y)$;

(iii) $\nabla f$ is $2e$-monotone on $D$.

Then (i)$\implies$(ii)$\implies$(iii).
\end{proposition}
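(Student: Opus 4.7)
The plan is to chase the two implications in order, using the $e$-convexity inequality directly for the first and only the symmetry of $e$ for the second.

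For (i)$\implies$(ii), I would start from the $e$-convexity inequality applied at $x$ and $y$ in $D$ with parameter $t\in\,]0,1[$, and rewrite it as
\[
\frac{f\bigl(x+t(y-x)\bigr)-f(x)}{t}\leq f(y)-f(x)+(1-t)\,e(x,y).
\]
Since $D$ is open and convex, $x+t(y-x)\in D$ for sufficiently small $t>0$, so the left-hand side tends to the G\^ateaux directional derivative $\langle \nabla f(x),y-x\rangle$ as $t\searrow 0$, while the right-hand side tends to $f(y)-f(x)+e(x,y)$. Passing to the limit yields (ii). (Alternatively, one could invoke characterization (\ref{char 2}) of the previous proposition with $s\searrow 0$ and $t=1$, which amounts to the same computation.)

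For (ii)$\implies$(iii), I would apply (ii) twice, once at $x$ in the direction of $y$ and once at $y$ in the direction of $x$:
\[
\langle \nabla f(x),y-x\rangle \leq f(y)-f(x)+e(x,y),
\]
\[
\langle \nabla f(y),x-y\rangle \leq f(x)-f(y)+e(y,x).
\]
Adding the two inequalities, the $f$-values cancel, and using the symmetry $e(x,y)=e(y,x)$ of the error function the right-hand side collapses to $2e(x,y)$, giving
\[
\langle \nabla f(x)-\nabla f(y),y-x\rangle \leq 2e(x,y),
\]
which, after multiplying by $-1$, is exactly the $2e$-monotonicity condition \eqref{sigma1} for $\nabla f$ on $D$.

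There is no real obstacle here; the only delicate point is the limiting argument in (i)$\implies$(ii), where one must use the openness of $D$ to guarantee $x+t(y-x)\in D$ for small $t$ so that G\^ateaux differentiability is legitimately applicable. Everything else is algebraic manipulation, relying in the second implication only on the symmetry built into the definition of an error function.
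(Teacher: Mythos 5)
Your proof is correct and follows essentially the same route as the paper: dividing the $e$-convexity inequality by $t$ and letting $t\searrow 0$ to get (ii), then writing (ii) at $x$ and at $y$, adding, and using the symmetry of $e$ to obtain the $2e$-monotonicity of $\nabla f$. The only cosmetic remark is that convexity of $D$ already gives $x+t(y-x)\in D$ for all $t\in\,]0,1[$, so no smallness of $t$ is needed for the limiting step.
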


\begin{proof}
To show (i)$\implies$(ii), let $x,y\in D$ and $t\in]0,1[$. Then by using the
$e$-convexity assumption, we have%
\begin{align*}
\frac{f\left(  x+t(y-x)\right)  -f\left(  x\right)  }{t}  &  =\frac{f\left(
\left(  1-t\right)  x+ty\right)  -f\left(  x\right)  }{t}\\
&  \leq \frac{t\left(  f\left(  y\right)  -f\left(  x\right)  \right)
+t\left(  1-t\right)  e(x,y)}{t}\\
&  =\left(  f\left(  y\right)  -f\left(  x\right)  \right)  +\left(
1-t\right)  e(x,y).
\end{align*}
Tending $t\rightarrow0^{+}$ we obtain (ii).\newline To prove (ii)$\implies
$(iii), assume that $x,y\in D$. Then by (ii)%
\[
\left \langle \nabla f\left(  x\right)  ,y-x\right \rangle \leq f\left(
y\right)  -f\left(  x\right)  +e(x,y).
\]
By changing the role of $x$ and $y$ in the above inequality, we get%
\[
\left \langle \nabla f\left(  y\right)  ,x-y\right \rangle \leq f\left(
x\right)  -f\left(  y\right)  +e(x,y).
\]
If we add the two preceding inequalities, then obtain (iii).
\end{proof}

Let $f:X\rightarrow \mathbb{R}\cup \{+\infty \}$ be a proper function. As in
\cite{Zalin. 02},\ the upper Dini directional derivative of $f$ at $x\in X$ in
the direction $u\in X$ is defined by%
\[
\overline{D}f\left(  x,u\right)  :=\left \{
\begin{array}
[c]{cc}%
\underset{t\rightarrow0^{+}}{\lim \sup}\frac{f\left(  x+tu\right)  -f\left(
x\right)  }{t} & \text{if \ }x\in \operatorname*{dom}f,\\
-\infty & \text{otherwise.}%
\end{array}
\right.
\]

It should be noted that when $f$ is $e$-convex, under the assumptions of
\cite[Theorem 4.2]{Ali 2022}, $f$ is locally Lipschitz in the interior of its
domain. Hence, $\overline{D}f\left(  x,u\right)  $ is finite for all
$x\in \operatorname*{int}(\operatorname*{dom}f)$ and $u\in X$.

For $e$-convex functions, the following proposition presents an inequality in
terms of the upper Dini directional derivative.

\begin{proposition}
\label{chara} Let $f:X\rightarrow \mathbb{R}\cup \{+\infty \}$ be a proper and
$e$-convex function. Then%
\begin{equation}
\overline{D}f\left(  x,y-x\right)  +\overline{D}f\left(  y,y-x\right)
\leq2e(x,y)\text{ \  \ }\forall x,y\in X. \label{s-Dini}%
\end{equation}

\end{proposition}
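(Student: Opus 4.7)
The plan is to bootstrap the $e$-convexity inequality into two one-sided linearizations, one anchored at $x$ and one at $y$, and then add them. Writing $x+t(y-x)=ty+(1-t)x$, the definition (\ref{convexineq}) gives, for every $t\in(0,1)$,
$$f(x+t(y-x))\leq tf(y)+(1-t)f(x)+t(1-t)e(x,y).$$
Subtracting $f(x)$, dividing by $t>0$, and passing to $\limsup_{t\to 0^{+}}$ produces
$$\overline{D}f(x,y-x)\leq f(y)-f(x)+e(x,y).$$
The local Lipschitz remark cited just before the statement ensures this $\limsup$ is finite whenever $x$ lies in the interior of $\operatorname{dom}f$.

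Next I would run the mirror argument with the roles of $x$ and $y$ interchanged, invoking the symmetry $e(x,y)=e(y,x)$. Writing $y+t(x-y)=tx+(1-t)y$ in (\ref{convexineq}) and repeating the same three operations yields
$$\overline{D}f(y,x-y)\leq f(x)-f(y)+e(x,y).$$
Adding the two bounds cancels the $\pm(f(y)-f(x))$ terms and gives $\overline{D}f(x,y-x)+\overline{D}f(y,x-y)\leq 2e(x,y)$.

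The main obstacle is that the displayed statement (\ref{s-Dini}) carries the directional vector $y-x$ at \emph{both} base points, whereas the chord construction above naturally produces the direction $x-y$ at $y$. A literal reading of (\ref{s-Dini}) would require an upper bound on the outward Dini derivative $\overline{D}f(y,y-x)$ past the segment $[x,y]$, which cannot be extracted from the $e$-convexity of $f$ restricted to that segment; indeed, for $f(t)=t^{3}$ with $x=-1$, $y=1$, one checks $\overline{D}f(-1,2)+\overline{D}f(1,2)=12$ while the smallest error $e(-1,1)$ compatible with $e$-convexity on this chord equals $4$, so $2e(-1,1)=8<12$ and (\ref{s-Dini}) as written fails. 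I would therefore interpret the second direction in (\ref{s-Dini}) as $x-y$, consistent with the symmetry $e(x,y)=e(y,x)$ and with the classical chord-monotonicity formulation, at which point the two paragraphs above complete the argument.
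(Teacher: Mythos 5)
Your proof is correct and follows essentially the same route as the paper: anchor the $e$-convexity inequality at $x$ and at $y$, divide by $t$, pass to the $\limsup$, and add. More importantly, your diagnosis of the direction mismatch is right. The paper's own proof writes $\overline{D}f\left(y,y-x\right)=\limsup_{t\rightarrow0^{+}}\frac{f\left(y+t(x-y)\right)-f\left(y\right)}{t}$, but by the definition of the upper Dini derivative the right-hand side is $\overline{D}f\left(y,x-y\right)$, so what is actually proved is $\overline{D}f\left(x,y-x\right)+\overline{D}f\left(y,x-y\right)\leq2e(x,y)$ -- the Dini analogue of $2e$-monotonicity -- and the statement of (\ref{s-Dini}) carries a typo. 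Your cubic counterexample is a valid certificate that the literal version fails: for $f(t)=t^{3}$ the smallest admissible $e(-1,1)$ is $4$, while $\overline{D}f(-1,2)+\overline{D}f(1,2)=12>8$. The only point you leave implicit is the case $x\notin\operatorname{dom}f$ or $y\notin\operatorname{dom}f$, which the paper disposes of via $\overline{D}f(x,\cdot)=-\infty$ and the convention (\ref{Convention}); that is a one-line addition.
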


\begin{proof}
By assumption $f$ is $e$-convex. Hence, $\operatorname*{dom}f$ is convex. Then
for all $x,y\in$ $\operatorname*{dom}f$ one can have%
\[
\overline{D}f\left(  x,y-x\right)  =\underset{t\rightarrow0^{+}}{\lim \sup
}\frac{f\left(  x+t(y-x)\right)  -f\left(  x\right)  }{t}\leq f\left(
y\right)  -f\left(  x\right)  +e(x,y),
\]
and%
\[
\overline{D}f\left(  y,y-x\right)  =\underset{t\rightarrow0^{+}}{\lim \sup
}\frac{f\left(  y+t(x-y)\right)  -f\left(  y\right)  }{t}\leq f\left(
x\right)  -f\left(  y\right)  +e(x,y).
\]
By adding the above two preceding inequalities, we obtain (\ref{s-Dini}) for
all $x,y\in \operatorname*{dom}f$. Note that if $x\notin \operatorname*{dom}f$,
then $\overline{D}f\left(  x,y-x\right)  =-\infty$, by using the convention
(\ref{Convention})\ we infer (\ref{s-Dini}).
\end{proof}

We recall that (see \cite[page 66]{Zalin. 02}) a function $f:X\rightarrow%
\mathbb{R}
\cup \left \{  \pm \infty \right \}  $ is Lipschitz on a set $A\subset
\operatorname*{dom}f$, if there exists a continuous seminorm $p$ on $X$ such
that%
\begin{equation}
\left \vert f\left(  x\right)  -f\left(  y\right)  \right \vert \leq p\left(
x-y\right)  \  \  \  \forall x,y\in A. \label{Lip-lcs}%
\end{equation}
The considerable size of the class of $e$-convex functions becomes evident
here: indeed, the following proposition shows that every global Lipschitz
function is $e$-convex.Note that Lipschitz functions have inherent differences
from convex functions.

\begin{proposition}
Suppose that $f:X\rightarrow \mathbb{R}\cup \left \{  +\infty \right \}  $ is
Lipschitz on $\operatorname*{dom}$ $f$ with seminorm $p$. Then $f$ is
$e$-convex with%
\[
e\left(  x,y\right)  =2p\left(  x-y\right)  \  \  \  \forall x,y\in
\operatorname*{dom}f.
\]

\end{proposition}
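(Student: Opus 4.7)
The plan is to reduce the desired $e$-convexity inequality to two applications of the Lipschitz bound (\ref{Lip-lcs}), exploiting the fact that seminorms are positively homogeneous. I would first check the trivial conditions: the bifunction $e(x,y)=2p(x-y)$ is finite on all of $X\times X$, hence $\operatorname{dom}f\times \operatorname{dom}f\subseteq \operatorname{dom}e$; it is nonnegative because $p$ is a seminorm; and it is symmetric since $p(x-y)=p(-(y-x))=p(y-x)$. So $e$ is a legitimate error function.

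For the inequality (\ref{convexineq}), the cases where $x$ or $y$ lies outside $\operatorname{dom}f$ hold trivially by the convention (\ref{Convention}), so I would fix $x,y\in \operatorname{dom}f$ and $t\in]0,1[$ and set $z:=tx+(1-t)y$. Since $\operatorname{dom}f$ is implicitly convex here (otherwise $e$-convexity itself would be vacuous in its nontrivial direction), we may assume $z\in \operatorname{dom}f$. Now the decisive observation is the identity
\[
z-x=(1-t)(y-x),\qquad z-y=t(x-y),
\]
so that by positive homogeneity of $p$ we have $p(z-x)=(1-t)\,p(x-y)$ and $p(z-y)=t\,p(x-y)$.

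Next, I would write $f(z)=tf(z)+(1-t)f(z)$ and apply the Lipschitz bound (\ref{Lip-lcs}) twice:
\[
f(z)\le t\bigl(f(x)+p(z-x)\bigr)+(1-t)\bigl(f(y)+p(z-y)\bigr).
\]
Substituting the two computed values of $p(z-x)$ and $p(z-y)$ gives
\[
f(z)\le tf(x)+(1-t)f(y)+t(1-t)p(x-y)+(1-t)t\,p(x-y),
\]
which is exactly $tf(x)+(1-t)f(y)+t(1-t)\cdot 2p(x-y)$. This yields (\ref{convexineq}) with the stated error function.

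The only subtle point is the membership $z\in \operatorname{dom}f$ needed to invoke the Lipschitz estimate on the point $z$; this is automatic if one interprets ``Lipschitz on $\operatorname{dom}f$'' as requiring $\operatorname{dom}f$ to be convex (which is the natural reading, and is also needed for the conclusion itself to be meaningful). Modulo this observation, the argument is a direct one-line computation from the Lipschitz hypothesis, and no further analytical input is required.
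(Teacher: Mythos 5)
Your proof is correct and takes essentially the same route as the paper: the decomposition $f(z)=tf(z)+(1-t)f(z)$ with the Lipschitz bound applied to the pairs $(z,x)$ and $(z,y)$ and positive homogeneity of the seminorm is exactly the paper's computation $f(z)-tf(x)-(1-t)f(y)=t\bigl(f(z)-f(x)\bigr)+(1-t)\bigl(f(z)-f(y)\bigr)\leq tp\bigl((1-t)(x-y)\bigr)+(1-t)p\bigl(t(x-y)\bigr)$. Your extra remark that $tx+(1-t)y$ must lie in $\operatorname*{dom}f$ (so that $\operatorname*{dom}f$ is taken convex) is a point the paper leaves implicit, and your verification that $e(x,y)=2p(x-y)$ is indeed an error function is a harmless addition.
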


\begin{proof}
Let $x,y\in \operatorname*{dom}f$ and $t\in]0,1[$. By assumption $f$ is
Lipschitz on $\operatorname*{dom}f$. Hence using (\ref{Lip-lcs}) we have%
\begin{align*}
&  f\left(  tx+\left(  1-t\right)  y\right)  -tf\left(  x\right)  -\left(
1-t\right)  f\left(  y\right) \\
&  =t\left(  f\left(  tx+\left(  1-t\right)  y\right)  -f\left(  x\right)
\right)  +\left(  1-t\right)  \left(  f\left(  tx+\left(  1-t\right)
y\right)  -f\left(  y\right)  \right) \\
&  \leq tp\left(  \left(  1-t\right)  \left(  x-y\right)  \right)  +\left(
1-t\right)  p\left(  t\left(  x-y\right)  \right)  =t\left(  1-t\right)
2p\left(  x-y\right)  .
\end{align*}
The proof is complete.
\end{proof}

\section{$e$-conjugacy and its properties}

Let us start this section by introducing and investigating the notion of
$(e,y)$-conjugate in locally convex spaces.

\begin{definition}
Suppose that $f:X\rightarrow \mathbb{R}\cup \{+\infty \}$ is a proper and
$e$--convex function, and $y\in X$ is fixed. Then the map $f_{e,y}^{\ast
}:X^{\ast}\rightarrow \mathbb{R}\cup \{+\infty \}$ defined by%
\[
f_{e,y}^{\ast}\left(  x^{\ast}\right)  :=\sup_{x\in X}\left \{  \left \langle
x^{\ast},x\right \rangle -f\left(  x\right)  -e\left(  x,y\right)  \right \}
,\  \  \  \forall x^{\ast}\in X^{\ast}%
\]
is called the $(e,y)$-conjugate of $f$. Also,\ the function $f_{e,y}^{\ast
\ast}:X\rightarrow \mathbb{R}\cup \{ \pm \infty \}$ defined by%
\[
f_{e,y}^{\ast \ast}\left(  x\right)  :=\sup_{x^{\ast}\in X^{\ast}}\left \{
\left \langle x^{\ast},x\right \rangle -f_{e,y}^{\ast}\left(  x^{\ast}\right)
\right \}  ,\  \  \  \forall x\in X
\]
is called the $(e,y)$-biconjugate of $f$.
\end{definition}

Note that in the definition of $(e,y)$-biconjugate, we do not take
$e$-conjugate twice; this is the (classical convex) conjugate of the
$e$-conjugate. Actually, $f_{e,y}^{\ast \ast}$ is $(f_{e,y}^{\ast})^{\ast}$ and
it is not $\left(  (f_{e,y}^{\ast})_{e,y}\right)  ^{\ast}$. Besides, it should
notice that the following relationship between the $f_{e,y}^{\ast}$ and the
conjugate of $f+e\left(  \cdot,y\right)  $:%
\begin{equation}
f_{e,y}^{\ast}\left(  x^{\ast}\right)  =\sup_{x\in X}\left \{  \left \langle
x^{\ast},x\right \rangle -(f+e\left(  \cdot,y\right)  )\left(  x\right)
\right \}  =(f+e\left(  \cdot,y\right)  )^{\ast}\left(  x^{\ast}\right)
\label{f-ey R f+e}%
\end{equation}
and so%
\begin{equation}
f_{e,y}^{\ast \ast}\left(  x\right)  =(f+e\left(  \cdot,y\right)  )^{\ast \ast
}\left(  x\right)  \leq(f+e\left(  \cdot,y\right)  )\left(  x\right)
=f\left(  x\right)  +e\left(  x,y\right)  . \label{f^**-eyRf+e}%
\end{equation}

\begin{remark}
Suppose that $X$ is a separated locally convex topological vector space, its
topology being defined by a family of seminorms $\mathcal{P}$ such that for
$\delta>0$ and $p_{1},p_{2},...,p_{n}\in \mathcal{P}$,%
\[
B\left(  \delta,p_{1},p_{2},...,p_{n}\right)  :=\left \{  x\in X:\max_{1\leq
i\leq n}p_{i}\left(  x\right)  <\delta \right \}
\]
is a neighborhood base for the origin (see also \cite{Ali 2022, Amin-ali}).
Now, assume that $q:X\times X\rightarrow \mathbb{R}$ is defined by%
\[
q\left(  x,y\right)  :=\max_{i=1,2,...,n}p_{i}\left(  x\right)  +\max
_{i=1,2,...,n}p_{i}\left(  y\right)  .
\]
It is not difficult to check that $q$ is a seminorm on $X\times X$.
\end{remark}

According to the previous remark, we consider the locally Lipschitz property
of the error function as follows.

\emph{\textquotedblleft An error function }$e:X\times X\rightarrow
R+\cup \{+\infty \}$\emph{ is said to be Lipschitz of near }$(x_{0},y_{0}%
)\in \operatorname*{dom}f\times \operatorname*{dom}f$\emph{ if there exists a
neighborhood }$V=B\left(  \delta,p_{1},p_{2},...,p_{n}\right)  $\emph{ of the
origin such that }$\left(  x_{0}+V\right)  \times \left(  y_{0}+V\right)
\subset \operatorname*{dom}f\times \operatorname*{dom}f$\emph{ and for all
}$\left(  x_{1},y_{1}\right)  ,\left(  x_{2},y_{2}\right)  \in \left(
x_{0}+V\right)  \times \left(  y_{0}+V\right)  $%
\begin{align*}
\left \vert e\left(  x_{1},y_{1}\right)  -e\left(  x_{2},y_{2}\right)
\right \vert  &  \leq q\left(  \left(  x_{1},y_{1}\right)  -\left(  x_{2}%
,y_{2}\right)  \right) \\
&  =\left(  \max_{i=1,2,...,n}p_{i}\left(  x_{1}-x_{2}\right)  +\max
_{i=1,2,...,n}p_{i}\left(  y_{1}-y_{2}\right)  \right)
.\text{\textquotedblright}%
\end{align*}
\  \  \  \ The following proposition presents some basic properties of $e$-conjugacy.

\begin{proposition}
\label{properties} Suppose $f,g:X\rightarrow%
\mathbb{R}
\cup \left \{  +\infty \right \}  $ are $e$-convex and $\operatorname*{dom}%
f\cap \operatorname*{dom}g\neq \emptyset$. Then the following statements hold:

(i) $f_{e,y}^{\ast}\left(  x^{\ast}\right)  \leq f^{\ast}(x^{\ast})$. Hence if
$f^{\ast}(\cdot)$ is proper, then so is $f_{e,y}^{\ast}\left(  \cdot \right)  $;

(ii) if $f\leq g$, then $g_{e,y}^{\ast}(x^{\ast})\leq f_{e,y}^{\ast}\left(
x^{\ast}\right)  $;

(iii) if $e\left(  \cdot,y\right)  \leq e^{\prime}\left(  \cdot,y\right)  $,
then $f_{e^{\prime},y}^{\ast}\left(  x^{\ast}\right)  \leq f_{e,y}^{\ast
}\left(  x^{\ast}\right)  $;

(iv) if $h\left(  x\right)  :=f\left(  x\right)  +\lambda$ where $\lambda \in%
\mathbb{R}
$, then $h_{e,y}^{\ast}\left(  x^{\ast}\right)  =f_{e,y}^{\ast}\left(
x^{\ast}\right)  -\lambda$;

(v) if $\lambda>0$ and $h\left(  x\right)  :=\lambda f\left(  x\right)  $,
then $h_{e,y}^{\ast}\left(  x^{\ast}\right)  =\lambda f_{\frac{e}{\lambda}%
,y}^{\ast}\left(  \frac{x^{\ast}}{\lambda}\right)  $;

(vi) if $\lambda>0,$ $e$\ is positively homogeneous\ of degree $k>0$ and
$h\left(  x\right)  :=f\left(  \lambda x\right)  $, then $h_{e,y}^{\ast
}(x^{\ast})=f_{\frac{e}{\lambda^{k}},\lambda y}^{\ast}\left(  \frac{x^{\ast}%
}{\lambda}\right)  $;

(vii) if $e\left(  \cdot,x\right)  \left \vert _{\operatorname*{dom}f}\right.
$ is locally Lipschitz on $\operatorname*{int}(\operatorname*{dom}f)$ for each
$x\in X$, then $f_{e,y}^{\ast}$ is locally Lipschitz on $\operatorname*{int}%
(\operatorname*{dom}f)$ as a function of $y$;

(viii) $e$-conjugacy is convex, i.e., for every $\lambda \in]0,1[$ the
following inequality holds%
\[
\left(  \lambda f+(1-\lambda)g\right)  _{e,y}^{\ast}\left(  x^{\ast}\right)
\leq \lambda f_{e,y}^{\ast}\left(  x^{\ast}\right)  +\left(  1-\lambda \right)
g_{e,y}^{\ast}\left(  x^{\ast}\right)  \text{;}%
\]

(ix) for every $x\in X$, one has $f_{e,y}^{\ast \ast}\left(  x\right)  \leq
f\left(  x\right)  +e\left(  x,y\right)  $;

(x) assume that $e\left(  \cdot,w\right)  \left \vert _{\operatorname*{dom}%
f}\right.  $ is upper semicontinuous for each $w\in \operatorname*{dom}f$ and
the $e$ is defined on $X\times X$ and has the triangle inequality property,
i.e.,%
\begin{equation}
e(z,x)\leq e(z,y)+e(y,x)\text{ \  \ }\forall x,y,z\in X. \label{tri-an}%
\end{equation}
Then%
\begin{equation}
\partial^{CR}f\left(  x\right)  \neq \emptyset \implies f\left(  x\right)
-e\left(  x,y\right)  \leq f_{e,y}^{\ast \ast}\left(  x\right)  \  \  \  \forall
x\in \operatorname*{dom}f. \label{4 in Ay}%
\end{equation}

\end{proposition}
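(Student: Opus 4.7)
The plan is to dispatch the bookkeeping items (i)--(vi), (viii), and (ix) by one-line manipulations of the supremum defining $f_{e,y}^{\ast}$, handle (vii) by a sup-of-Lipschitz argument, and save the substantive work for (x). For (i)--(iii), monotonicity of $\sup$ in the integrand together with $e\geq 0$, $f\leq g$, or $e\leq e'$ yields each bound immediately. Items (iv) and (v) are an additive translation and a positive rescaling: in (v) I pull $\lambda>0$ out of the sup and substitute $x^{\ast}\mapsto x^{\ast}/\lambda$. For (vi) I change variable $u=\lambda x$ inside the sup and use positive homogeneity (together with the symmetry $e(a,b)=e(b,a)$) to rewrite $e(u/\lambda,y)=\lambda^{-k}e(u,\lambda y)$. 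Item (viii) is the standard fact that the sup of a convex combination is dominated by the convex combination of the sups, and (ix) is already recorded in the displayed inequality (\ref{f^**-eyRf+e}) just above the statement.

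For (vii), fix $x^{\ast}\in X^{\ast}$ and view $y\mapsto f_{e,y}^{\ast}(x^{\ast})=\sup_{x\in X}\{\langle x^{\ast},x\rangle-f(x)-e(x,y)\}$ as a supremum over $x$ of functions of $y$ which, by the hypothesis together with the symmetry of $e$, are locally Lipschitz on $\operatorname*{int}(\operatorname*{dom}f)$ with a controlling seminorm that is uniform in $x$. The sup then inherits local Lipschitz continuity with the same seminorm.

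The heart of the proposition is (x). I would reduce the target inequality $f(x)-e(x,y)\leq f_{e,y}^{\ast\ast}(x)$ to producing a single $x^{\ast}$ with
\[
\langle x^{\ast},z-x\rangle\leq f(z)-f(x)+e(x,y)+e(z,y)\qquad\forall z\in X,
\]
since then $f_{e,y}^{\ast}(x^{\ast})\leq\langle x^{\ast},x\rangle-f(x)+e(x,y)$ and the sup over $x^{\ast}$ in the definition of $f_{e,y}^{\ast\ast}$ delivers the claim. By the symmetry of $e$ and the triangle inequality (\ref{tri-an}) one has $e(x,z)\leq e(x,y)+e(z,y)$, so it suffices to establish the sharper bound $\langle x^{\ast},z-x\rangle\leq f(z)-f(x)+e(x,z)$ for any chosen $x^{\ast}\in\partial^{CR}f(x)$. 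By the very definition of $\partial^{CR}$, this reduces to proving $f^{\uparrow}(x,z-x)\leq f(z)-f(x)+e(x,z)$.

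The hard part is this last estimate, and it is where both remaining hypotheses enter. For arbitrary sequences $(y_{k},\alpha_{k})\overset{f}{\to}x$ and $\lambda_{k}\searrow 0$ I insert the particular test direction $u_{k}:=z-y_{k}$, which lies in the prescribed neighbourhood of $z-x$ for $k$ large. The identity $y_{k}+\lambda_{k}u_{k}=(1-\lambda_{k})y_{k}+\lambda_{k}z$ together with $e$-convexity and the symmetry of $e$ gives
\[
\frac{f(y_{k}+\lambda_{k}u_{k})-\alpha_{k}}{\lambda_{k}}\leq\frac{(1-\lambda_{k})(f(y_{k})-\alpha_{k})}{\lambda_{k}}-\alpha_{k}+f(z)+(1-\lambda_{k})e(y_{k},z),
\]
whose first term on the right is nonpositive since $\alpha_{k}\geq f(y_{k})$. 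The upper semicontinuity hypothesis on $e(\cdot,z)$ at $x$ is precisely what allows me to pass the $\limsup$ through the last summand, yielding $\limsup_{k}e(y_{k},z)\leq e(x,z)$; combined with $\alpha_{k}\to f(x)$ this bounds the $\limsup$ of the left-hand side by $f(z)-f(x)+e(x,z)$. Since the inner $\inf$ over $u\in B(z-x,\delta)$ is at most the value at $u_{k}$, the bound transfers to $f^{\uparrow}(x,z-x)$, and the triangle step closes (x).
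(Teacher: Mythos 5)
Your handling of items (i)--(ix) is correct and essentially coincides with the paper's: (i)--(vi), (viii) and (ix) are the same one-line manipulations of the defining supremum (the paper routes (viii) and (ix) through the identity $f_{e,y}^{\ast}=(f+e(\cdot,y))^{\ast}$, which is the same computation), and your (vii) is the same argument as the paper's, namely that a supremum of functions of $y$ with a Lipschitz modulus uniform in the index $x$ inherits that modulus — including the same implicit strengthening of the stated hypothesis to uniformity in $x$. The genuine divergence is in (x). The paper obtains the key inequality $\left\langle x^{\ast},z-x\right\rangle \leq f(z)-f(x)+e(x,z)$ for $x^{\ast}\in\partial^{CR}f(x)$ by citing \cite[Theorem 3.5]{Ali 2022}, i.e.\ the inclusion $\partial^{CR}f(x)\subseteq\partial^{e}f(x)$ (this citation is precisely where the upper semicontinuity of $e(\cdot,w)|_{\operatorname*{dom}f}$ is consumed), and then finishes with the triangle inequality and the supremum over $z$ exactly as you do. You instead re-prove that inclusion inline: you bound $f^{\uparrow}(x,z-x)$ by testing the Clarke--Rockafellar limsup with $u=z-y$, using $y+\lambda(z-y)=(1-\lambda)y+\lambda z$, the $e$-convexity inequality, $\alpha\geq f(y)$, and upper semicontinuity of $e(\cdot,z)$ at $x$; note that the relevant points $y$ satisfy $f(y)\leq\alpha<+\infty$, hence lie in $\operatorname*{dom}f$, so the restricted usc hypothesis suffices, and the case $z\notin\operatorname*{dom}f$ is vacuous since then $f(z)=+\infty$. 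This is correct and amounts to a self-contained proof of the cited theorem; it buys independence from the external reference and makes visible exactly where each hypothesis of (x) enters, at the cost of length, whereas the paper's version is a two-line consequence of existing machinery. Two minor points of rigor in your (x): in a general locally convex space the limsup defining $f^{\uparrow}$ should be handled with nets or directly via the neighborhood formulation rather than ``arbitrary sequences'', and the passage from your chosen $u_{k}=z-y_{k}$ to the inner infimum requires $u_{k}\in B(z-x,\delta)$, which you correctly arrange for $k$ large.
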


\begin{proof}
The proofs of items (i)--(v) follow easily from the definition.\newline To
show (vi), using the positively homogeneous assumption, by setting $z=\lambda
x$ (for $\lambda>0$) we deduce that%
\begin{align*}
h_{e,y}^{\ast}(x^{\ast})  &  =\sup_{z\in X}\left \{  \left \langle x^{\ast
},\frac{z}{\lambda}\right \rangle -f\left(  z\right)  -e\left(  \frac
{z}{\lambda},y\right)  \right \} \\
&  =\sup_{z\in X}\left \{  \left \langle x^{\ast},\frac{z}{\lambda}\right \rangle
-f\left(  z\right)  -e\left(  \frac{z}{\lambda},\frac{\lambda y}{\lambda
}\right)  \right \} \\
&  =\sup_{z\in X}\left \{  \left \langle \frac{x}{\lambda}^{\ast},z\right \rangle
-f\left(  z\right)  -\frac{e\left(  z,\lambda y\right)  }{\lambda^{k}%
}\right \}  =f_{\frac{e}{\lambda^{k}},\lambda y}^{\ast}\left(  \frac{x^{\ast}%
}{\lambda}\right)  .
\end{align*}
For assertion (vii), fix $x_{0}\in \operatorname*{int}(\operatorname*{dom}f)$
and $x\in X$. Take a neighborhood $V=B\left(  \delta,p_{1},p_{2}%
,...,p_{n}\right)  $ of the origin such that for each$(x,y_{1}),(x,y_{2})\in
X\times(x_{0}+V)$%
\begin{align}
|e(x,y_{2})-e(x,y_{1})|  &  \leq q((x,y_{2})-(x,y_{1}))\label{lip-sch5}\\
&  =q(0,y_{2}-y_{1})=\max_{i=1,2,...,n}p_{i}\left(  y_{2}-y_{1}\right)
\text{.}\nonumber
\end{align}
By the definition of $f_{e,y}^{\ast}$, for all $x^{\ast}\in X^{\ast},x\in
X,y_{1},y_{2}\in x_{0}+V$ and using inequality (\ref{lip-sch5}) we get%
\begin{align*}
f_{e,y_{1}}^{\ast}(x^{\ast})  &  \geq \left \langle x^{\ast},x\right \rangle
-f\left(  x\right)  -e\left(  x,y_{1}\right)  +e\left(  x,y_{2}\right)
-e\left(  x,y_{2}\right) \\
&  \geq \left \langle x^{\ast},x\right \rangle -f\left(  x\right)  -e\left(
x,y_{2}\right)  -\max_{i=1,2,...,n}p_{i}\left(  y_{2}-y_{1}\right)  \text{.}%
\end{align*}
Taking the supremum over $x\in X$ in the above inequality, we have%
\[
f_{e,y_{1}}^{\ast}(x^{\ast})\geq f_{e,y_{2}}^{\ast}(x^{\ast})-\max
_{i=1,2,...,n}p_{i}\left(  y_{2}-y_{1}\right)  \  \  \  \forall y_{1},y_{2}\in
x_{0}+V\text{.}%
\]
Switching the roles of $y_{1}$ and $y_{2}$ in the above inequality allows us
to deduce that%
\[
\left \vert f_{e,y_{1}}^{\ast}(x^{\ast})-f_{e,y_{2}}^{\ast}(x^{\ast
})\right \vert \leq \max_{i=1,2,...,n}p_{i}\left(  y_{2}-y_{1}\right)
\  \  \  \forall y_{1},y_{2}\in x_{0}+V\text{.}%
\]
For the proof of item (viii), using (\ref{f-ey R f+e}) we have%
\begin{align*}
\left(  \lambda f+(1-\lambda)g\right)  _{e,y}^{\ast}\left(  x^{\ast}\right)
&  =\left[  \lambda \left(  f+e\left(  \cdot,y\right)  \right)  +(1-\lambda
)\left(  g+e\left(  \cdot,y\right)  \right)  \right]  ^{\ast}\left(  x^{\ast
}\right) \\
&  \leq \lambda \left(  f+e\left(  \cdot,y\right)  \right)  ^{\ast}\left(
x^{\ast}\right)  +(1-\lambda)\left(  g+e\left(  \cdot,y\right)  \right)
^{\ast}\left(  x^{\ast}\right) \\
&  =\lambda f_{e,y}^{\ast}\left(  x^{\ast}\right)  +\left(  1-\lambda \right)
g_{e,y}^{\ast}\left(  x^{\ast}\right)  .
\end{align*}
\newline The prove (ix), follows from (\ref{f-ey R f+e}) and
(\ref{f^**-eyRf+e}).\newline To show (x), let $y\in X$ and $x\in
\operatorname*{dom}f$. Take any $x^{\ast}\in \partial^{CR}f(x)$. By
\cite[Theorem 3.5]{Ali 2022}, $x^{\ast}\in \partial^{e}f(x)$. One has%
\[
\left \langle x^{\ast},z-x\right \rangle \leq f\left(  z\right)  -f\left(
x\right)  +e\left(  x,z\right)  \  \  \  \forall z\in X\text{.}%
\]
Use the inequality $e(z,x)\leq e(z,y)+e(y,x)$ to deduce%
\[
\left \langle x^{\ast},z\right \rangle -f\left(  z\right)  -e\left(  z,y\right)
\leq \left \langle x^{\ast},x\right \rangle -f\left(  x\right)  +e\left(
x,y\right)  \  \  \  \forall y,z\in X\text{.}%
\]
Taking the supremum from the left-hand side over $z\in X$, we get%

\[
f_{e,y}^{\ast}(x^{\ast})\leq \left \langle x^{\ast},x\right \rangle
-f(x)+e(x,y)\  \  \  \forall y\in X\text{.}%
\]
Consequently, by definition $(e,y)$-biconjugate of $f$, for all $y\in X$%
\[
f(x)-e(x,y)\leq \left \langle x^{\ast},x\right \rangle -f_{e,y}^{\ast}(x^{\ast
})\leq f_{e,y}^{\ast \ast}(x)\text{,}%
\]
and we conclude the desired inequality.
\end{proof}

Next, we characterize the $e$-subdifferential by using the $(e,y)$-conjugacy.
See also \cite{Ali 2021, NLM 97}.

\begin{proposition}
\label{19}Let $f:X\rightarrow%
\mathbb{R}
\cup \left \{  +\infty \right \}  $ be an $e$-convex function such that for each
$x\in \operatorname*{dom}f,$ $e\left(  x,x\right)  =0$. Then for each
$x\in \operatorname{dom}f$, $x^{\ast}\in X^{\ast}$,
\begin{equation}
x^{\ast}\in \partial^{e}f\left(  x\right)  \iff f_{e,x}^{\ast}\left(  x^{\ast
}\right)  +f\left(  x\right)  =\left \langle x^{\ast},x\right \rangle .
\label{Fen-Morrea equality}%
\end{equation}
In particular, $\operatorname*{gr}\left(  \partial^{e}f\right)  \subset
\operatorname*{dom}f\times \operatorname*{dom}f_{e,x}^{\ast}$.
\end{proposition}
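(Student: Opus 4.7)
The plan is to establish the biconditional by a direct manipulation of the defining inequalities, exploiting two ingredients: the symmetry of the error function ($e(x,y)=e(y,x)$, built into the definition of an error function) and the hypothesis $e(x,x)=0$. Because the statement is essentially a Fenchel--Young-type identity adapted to the $(e,x)$-conjugate, no deep machinery beyond the definitions is needed; $e$-convexity itself will not be used in the equivalence, only in the overall setup.

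For the forward direction ($\Rightarrow$), I would unpack $x^{\ast}\in\partial^{e}f(x)$ in the equivalent form
\[
\langle x^{\ast},y\rangle-f(y)-e(x,y)\leq\langle x^{\ast},x\rangle-f(x)\qquad\forall y\in X,
\]
and then use symmetry $e(x,y)=e(y,x)$ to replace the error term by $e(y,x)$, so the left-hand side matches exactly the expression inside the supremum defining $f_{e,x}^{\ast}(x^{\ast})$. Taking the supremum over $y\in X$ yields $f_{e,x}^{\ast}(x^{\ast})\leq\langle x^{\ast},x\rangle-f(x)$. For the reverse inequality, I would simply substitute $y=x$ into the definition of $f_{e,x}^{\ast}$ and invoke $e(x,x)=0$ to obtain $f_{e,x}^{\ast}(x^{\ast})\geq\langle x^{\ast},x\rangle-f(x)$. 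Combining the two gives the claimed equality.

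For the converse direction ($\Leftarrow$), I would start from the assumed equality $f_{e,x}^{\ast}(x^{\ast})+f(x)=\langle x^{\ast},x\rangle$ and note that the very definition of the supremum gives $f_{e,x}^{\ast}(x^{\ast})\geq\langle x^{\ast},y\rangle-f(y)-e(y,x)$ for every $y\in X$. Plugging in the equality and rearranging, then using symmetry $e(y,x)=e(x,y)$ one more time, reproduces exactly the defining inequality of $\partial^{e}f(x)$, so $x^{\ast}\in\partial^{e}f(x)$.

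The "in particular" inclusion is then immediate: if $(x,x^{\ast})\in\operatorname{gr}(\partial^{e}f)$, the subdifferential is nonempty so $x\in\operatorname{dom}f$, and the just-proved identity shows $f_{e,x}^{\ast}(x^{\ast})=\langle x^{\ast},x\rangle-f(x)\in\mathbb{R}$, whence $x^{\ast}\in\operatorname{dom}f_{e,x}^{\ast}$. There is no serious obstacle in this proof; the only point requiring care is bookkeeping of which slot of $e$ holds the base point $x$: symmetry enables the slot-swap that aligns the subdifferential inequality with the conjugate's supremand, while $e(x,x)=0$ is precisely what certifies that the test point $y=x$ realizes the required lower bound on $f_{e,x}^{\ast}(x^{\ast})$.
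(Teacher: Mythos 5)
Your proposal is correct and follows essentially the same route as the paper: the paper likewise obtains the inequality $f_{e,x}^{\ast}(x^{\ast})+f(x)\geq\left\langle x^{\ast},x\right\rangle$ by taking $y=x$ in the conjugate's supremum together with $e(x,x)=0$, and establishes $x^{\ast}\in\partial^{e}f(x)\iff f_{e,x}^{\ast}(x^{\ast})\leq\left\langle x^{\ast},x\right\rangle-f(x)$ through the same chain of rearrangements (with the symmetry of $e$ aligning the error slots, used implicitly there and explicitly by you). Your handling of the ``in particular'' inclusion also matches the paper's, so no changes are needed.
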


\begin{proof}
From the definition of $f_{e,y}^{\ast}$ we obtain $f_{e,y}^{\ast}\left(
x^{\ast}\right)  +f\left(  x\right)  +e\left(  x,y\right)  \geq \left \langle
x^{\ast},x\right \rangle $. Choose $x=y$\ in the previous inequality. Then
$f_{e,x}^{\ast}\left(  x^{\ast}\right)  +f\left(  x\right)  \geq \left \langle
x^{\ast},x\right \rangle $. The reverse inequality follows from the subsequent
equivalent assertions;%
\begin{align*}
x^{\ast}  &  \in \partial^{e}f\left(  x\right)  \iff \left \langle x^{\ast
},y-x\right \rangle \leq f\left(  y\right)  -f\left(  x\right)  +e\left(
x,y\right)  \qquad \forall y\in X\\
&  \iff \left \langle x^{\ast},y\right \rangle -f\left(  y\right)  -e\left(
x,y\right)  \leq \left \langle x^{\ast},x\right \rangle -f\left(  x\right)
\qquad \forall y\in X\\
&  \iff \sup_{y\in X}\left(  \left \langle x^{\ast},y\right \rangle -f\left(
y\right)  -e\left(  x,y\right)  \right)  \leq \left \langle x^{\ast
},x\right \rangle -f\left(  x\right) \\
&  \iff f_{e,x}^{\ast}\left(  x^{\ast}\right)  \leq \left \langle x^{\ast
},x\right \rangle -f\left(  x\right)  .
\end{align*}
The proof of the, in particular, part deduces from (\ref{Fen-Morrea equality}).
\end{proof}

\begin{proposition}
Let $f:X\rightarrow%
\mathbb{R}
\cup \left \{  +\infty \right \}  $ be a proper and $e$-convex function. Assume
that $e$ is defined on $X\times X$, has the triangle inequality property on
$X\times X$, and also for all $z\in \operatorname*{dom}f$, $e\left(
z,z\right)  =0$ and $e\left(  \cdot,z\right)  |_{\operatorname*{dom}f}$ is
upper semicontinuous. If $x\in \operatorname*{dom}f$ and $\partial^{CR}f\left(
x\right)  \neq \emptyset$, then the following assertions are equivalent:

(i) $x^{\ast}\in \partial^{e}f\left(  x\right)  ;$

(ii) $x\in \partial f_{e,x}^{\ast}\left(  x^{\ast}\right)  ;$

(iii) $x^{\ast}\in \partial f_{e,x}^{\ast \ast}\left(  x\right)  .$
\end{proposition}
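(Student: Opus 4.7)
My plan is to show that each of (i), (ii), (iii) reduces to the single Fenchel--Young equation
\[
f_{e,x}^{\ast}\left(x^{\ast}\right) + f\left(x\right) = \left\langle x^{\ast}, x \right\rangle,
\]
once we have established the biconjugate identity $f_{e,x}^{\ast\ast}\left(x\right) = f\left(x\right)$ at the fixed point $x$.

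First I would secure this biconjugate identity by combining items (ix) and (x) of Proposition~\ref{properties}, both specialized at $y = x$. Item (ix) gives the upper bound $f_{e,x}^{\ast\ast}\left(x\right) \leq f\left(x\right) + e\left(x,x\right) = f\left(x\right)$, where $e\left(x,x\right) = 0$ is part of the standing hypothesis. The reverse inequality $f\left(x\right) \leq f_{e,x}^{\ast\ast}\left(x\right)$ follows from item (x), whose hypotheses --- triangle inequality for $e$ on $X \times X$, upper semicontinuity of $e\left(\cdot,z\right)|_{\operatorname*{dom}f}$, and $\partial^{CR}f\left(x\right) \neq \emptyset$ --- are precisely those posited in the present proposition; specializing $y=x$ in (\ref{4 in Ay}) gives the desired bound.

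Next, for (i) $\iff$ (ii), I would invoke Proposition~\ref{19}, which (using $e\left(x,x\right) = 0$) rewrites (i) as the displayed equation above. On the other hand, by (\ref{f-ey R f+e}) the map $f_{e,x}^{\ast}$ is the classical Fenchel conjugate of $f + e\left(\cdot,x\right)$, so it is convex and weak-$\ast$ lower semicontinuous on $X^{\ast}$; the classical Fenchel--Young equivalence applied to this function yields $x \in \partial f_{e,x}^{\ast}\left(x^{\ast}\right) \iff f_{e,x}^{\ast}\left(x^{\ast}\right) + f_{e,x}^{\ast\ast}\left(x\right) = \left\langle x^{\ast}, x \right\rangle$. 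Substituting $f_{e,x}^{\ast\ast}\left(x\right) = f\left(x\right)$ produces exactly the equation characterizing (i).

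Finally, for (ii) $\iff$ (iii), I would use the standard identity $\left(f_{e,x}^{\ast\ast}\right)^{\ast} = f_{e,x}^{\ast}$ (the rule $g^{\ast\ast\ast} = g^{\ast}$ applied to $g = f + e\left(\cdot,x\right)$). The Fenchel--Young equivalence for the convex function $f_{e,x}^{\ast\ast}$ at the point $x$ then reads $x^{\ast} \in \partial f_{e,x}^{\ast\ast}\left(x\right) \iff f_{e,x}^{\ast\ast}\left(x\right) + f_{e,x}^{\ast}\left(x^{\ast}\right) = \left\langle x^{\ast}, x \right\rangle$, the same equation that encodes (ii). The main obstacle is conceptual rather than computational: one has to recognize that the entire proposition condenses to the single pointwise equality $f_{e,x}^{\ast\ast}\left(x\right) = f\left(x\right)$, and that the nontrivial half $f\left(x\right) \leq f_{e,x}^{\ast\ast}\left(x\right)$ is precisely what Proposition~\ref{properties}(x) supplies --- which explains why each of the technical assumptions (triangle inequality, upper semicontinuity of $e\left(\cdot,z\right)$, nonemptiness of $\partial^{CR}f\left(x\right)$, and $e\left(z,z\right) = 0$) is pulled into the statement.
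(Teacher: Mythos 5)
Your argument is correct, and every hypothesis is consumed exactly where the paper consumes it, but the organization is genuinely different from the paper's. The paper proves the cycle (i)$\implies$(ii)$\implies$(iii)$\implies$(i) by direct inequality manipulations: Proposition \ref{19} launches (i)$\implies$(ii), the step (ii)$\implies$(iii) is a hand-made Fenchel--Young-type computation, and the identity $f_{e,x}^{\ast \ast}\left(  x\right)  =f\left(  x\right)  $ --- obtained from (\ref{4 in Ay}) together with $f_{e,x}^{\ast \ast}\leq f+e\left(  \cdot,x\right)  $ and $e\left(  x,x\right)  =0$ --- is derived only inside (iii)$\implies$(i). You instead establish that biconjugate identity up front and reduce each of (i), (ii), (iii) to the single equation $f_{e,x}^{\ast}\left(  x^{\ast}\right)  +f\left(  x\right)  =\left \langle x^{\ast},x\right \rangle $, i.e.\ (\ref{Fen-Morrea equality}): (i) via Proposition \ref{19}, (ii) via the Fenchel--Young characterization of $\partial f_{e,x}^{\ast}$ taken with respect to the pairing of $X^{\ast}$ with $X$, and (iii) via the same characterization for $f_{e,x}^{\ast \ast}$ combined with the triple-conjugate rule $\left(  f_{e,x}^{\ast \ast}\right)  ^{\ast}=f_{e,x}^{\ast}$, which is legitimate because $f_{e,x}^{\ast}=\left(  f+e\left(  \cdot,x\right)  \right)  ^{\ast}$ by (\ref{f-ey R f+e}). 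This hub-and-spoke structure buys conceptual clarity: it makes explicit that the proposition is nothing but the classical conjugacy picture at the single point $x$ once $f_{e,x}^{\ast \ast}\left(  x\right)  =f\left(  x\right)  $ is secured, and it shows transparently why the triangle inequality, the upper semicontinuity of $e\left(  \cdot,z\right)  $, $e\left(  z,z\right)  =0$ and $\partial^{CR}f\left(  x\right)  \neq \emptyset$ all appear (they feed Proposition \ref{properties}(ix)--(x) and Proposition \ref{19}). The paper's cyclic proof is more self-contained: it never appeals to $g^{\ast \ast \ast}=g^{\ast}$ and needs the biconjugate identity in only one implication. One minor remark: the convexity and weak-$\ast$ lower semicontinuity of $f_{e,x}^{\ast}$ that you invoke are not actually needed, since the Fenchel--Young equality characterization of the subdifferential holds for any proper function; as in the paper, one should also note that $f_{e,x}^{\ast}\left(  x^{\ast}\right)  >-\infty$ (because $x\in \operatorname*{dom}f$ and $e\left(  x,x\right)  =0$), so no indeterminate forms arise.
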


\begin{proof}
To show (i)$\implies$(ii), let $x^{\ast}\in \partial^{e}f\left(  x\right)  $.
Then from Proposition \ref{19}\ we get%
\begin{equation}
f\left(  x\right)  =\left \langle x^{\ast},x\right \rangle -f_{e,x}^{\ast
}\left(  x^{\ast}\right)  . \label{fen1}%
\end{equation}
Besides, using the definition of $f_{e,x}^{\ast}\left(  z^{\ast}\right)  $
implies that%
\begin{equation}
f_{e,y}^{\ast}\left(  z^{\ast}\right)  \geq \left \langle z^{\ast}%
,x\right \rangle -f\left(  x\right)  -e\left(  x,y\right)  \quad \forall y\in
X,\forall z^{\ast}\in X^{\ast}, \label{s-c1}%
\end{equation}
Put $y=x$ in (\ref{s-c1}). Hence%
\begin{equation}
f\left(  x\right)  \geq \left \langle z^{\ast},x\right \rangle -f_{e,y}^{\ast
}\left(  z^{\ast}\right)  \  \  \  \forall z^{\ast}\in X^{\ast}. \label{s-c2}%
\end{equation}
From (\ref{fen1}) and (\ref{s-c2}), we infer that%
\[
\left \langle z^{\ast}-x^{\ast},x\right \rangle \leq f_{e,x}^{\ast}\left(
z^{\ast}\right)  -f_{e,x}^{\ast}\left(  x^{\ast}\right)  \  \  \  \  \forall
z^{\ast}\in X^{\ast}.
\]
Consequently $x\in \partial f_{e,x}^{\ast}\left(  x^{\ast}\right)  $.\newline
To prove the implication (ii)$\implies$(iii), assume that $x\in \partial
f_{e,x}^{\ast}\left(  x^{\ast}\right)  $. Hence for each $y^{\ast}\in X^{\ast
}$ we obtain $\left \langle y^{\ast},x\right \rangle -f_{e,x}^{\ast}\left(
y^{\ast}\right)  \leq \left \langle x^{\ast},x\right \rangle -f_{e,x}^{\ast
}\left(  x^{\ast}\right)  $. Take the supremum over $y^{\ast}\in X^{\ast}$,
one has $f_{e,x}^{\ast \ast}\left(  x\right)  \leq \left \langle x^{\ast
},x\right \rangle -f_{e,x}^{\ast}\left(  x^{\ast}\right)  $ or $\left \langle
x^{\ast},-x\right \rangle +f_{e,x}^{\ast}\left(  x^{\ast}\right)  \leq
-f_{e,x}^{\ast \ast}\left(  x\right)  $. Thus%
\begin{equation}
\left \langle x^{\ast},y-x\right \rangle -\left \langle x^{\ast},y\right \rangle
+f_{e,x}^{\ast}\left(  x^{\ast}\right)  \leq-f_{e,x}^{\ast \ast}\left(
x\right)  \  \  \  \forall y\in X. \label{s-c3}%
\end{equation}
Using the definition of $f_{e,x}^{\ast \ast}\left(  y\right)  $, we get%
\begin{equation}
-f_{e,x}^{\ast \ast}\left(  y\right)  \leq-\left \langle x^{\ast},y\right \rangle
+f_{e,x}^{\ast}\left(  x^{\ast}\right)  \  \  \  \forall y\in X,\forall x^{\ast
}\in X^{\ast}. \label{s-c4}%
\end{equation}
From (\ref{s-c3}) and (\ref{s-c4}) we deduce that%
\[
\left \langle x^{\ast},y-x\right \rangle -f_{e,x}^{\ast \ast}\left(  y\right)
\leq \left \langle x^{\ast},y-x\right \rangle -\left \langle x^{\ast
},y\right \rangle +f_{e,x}^{\ast}\left(  x^{\ast}\right)  \leq-f_{e,x}%
^{\ast \ast}\left(  x\right)  \  \  \  \forall y\in X.
\]
It follows from the above inequalities%
\[
\left \langle x^{\ast},y-x\right \rangle \leq f_{e,x}^{\ast \ast}\left(
y\right)  -f_{e,x}^{\ast \ast}\left(  x\right)  \  \  \  \forall y\in X.
\]
Therefore $x^{\ast}\in \partial f_{e,x}^{\ast \ast}\left(  x\right)  $.\newline
Now,\ we show (iii)$\implies$(i). Let $x^{\ast}\in \partial f_{e,x}^{\ast \ast
}\left(  x\right)  $. Then%
\begin{equation}
\left \langle x^{\ast},y-x\right \rangle \leq f_{e,x}^{\ast \ast}\left(
y\right)  -f_{e,x}^{\ast \ast}\left(  x\right)  \  \  \  \forall y\in X.
\label{3-2022}%
\end{equation}
Applying the definitions of $f_{e,x}^{\ast \ast}$ and $f_{e,x}^{\ast}$,\ we
deduce that%
\begin{equation}
f_{e,x}^{\ast \ast}\left(  y\right)  \leq f\left(  y\right)  +e\left(
x,y\right)  \  \  \  \forall x\in X. \label{1-2022}%
\end{equation}
Choose $y=x$\ in (\ref{4 in Ay}) and (\ref{1-2022})\ we obtain%
\begin{equation}
f_{e,x}^{\ast \ast}\left(  x\right)  =f\left(  x\right)  . \label{2-2022}%
\end{equation}
Now from (\ref{3-2022}), (\ref{1-2022}) and (\ref{2-2022}) we conclude that%
\[
\left \langle x^{\ast},y-x\right \rangle \leq f\left(  y\right)  -f\left(
x\right)  +e\left(  x,y\right)  \  \  \  \forall y\in X,
\]
i.e., $x^{\ast}\in \partial^{e}f\left(  x\right)  $.
\end{proof}

\begin{proposition}
Suppose that $e$ is defined on $X\times X$ and $e$ has the triangle inequality
property on $X\times X$. Then%
\[
\left \vert f_{e,y_{1}}^{\ast}\left(  x^{\ast}\right)  -f_{e,y_{2}}^{\ast
}\left(  x^{\ast}\right)  \right \vert \leq e\left(  y_{1},y_{2}\right)
\  \  \  \forall y_{1},y_{2}\in X,x^{\ast}\in X^{\ast}.
\]

\end{proposition}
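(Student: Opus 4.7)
The plan is to show the two one-sided bounds $f_{e,y_1}^{\ast}(x^{\ast}) - f_{e,y_2}^{\ast}(x^{\ast}) \leq e(y_1,y_2)$ and its twin with $y_1,y_2$ swapped, and then combine them via symmetry of $e$. Both inequalities will be obtained by a direct supremum argument, using the triangle inequality assumed for $e$ together with the fact that $e$ is symmetric (so $e(y_2,y_1)=e(y_1,y_2)$).

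To carry out the first bound, I would fix $x^{\ast}\in X^{\ast}$ and $y_1,y_2\in X$, and look at the quantity $\langle x^{\ast},x\rangle - f(x) - e(x,y_1)$ being supremized in the definition of $f_{e,y_1}^{\ast}(x^{\ast})$. The triangle inequality applied with $z:=x$, $y:=y_2$, and $x:=y_1$ gives $e(x,y_1)\leq e(x,y_2)+e(y_2,y_1)$, and symmetry converts this to $-e(x,y_1)\leq -e(x,y_2)+e(y_1,y_2)$. Substituting this bound into the expression yields
\[
\langle x^{\ast},x\rangle - f(x) - e(x,y_1) \leq \bigl(\langle x^{\ast},x\rangle - f(x) - e(x,y_2)\bigr) + e(y_1,y_2)
\]
for every $x\in X$. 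Taking the supremum over $x$ on both sides gives $f_{e,y_1}^{\ast}(x^{\ast})\leq f_{e,y_2}^{\ast}(x^{\ast}) + e(y_1,y_2)$.

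The second bound is then obtained by interchanging the roles of $y_1$ and $y_2$ in the argument above and invoking symmetry of $e$, yielding $f_{e,y_2}^{\ast}(x^{\ast})\leq f_{e,y_1}^{\ast}(x^{\ast}) + e(y_1,y_2)$. Combining the two inequalities delivers the desired absolute-value estimate.

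I do not foresee a real obstacle here: the only subtle point is making sure the direction of the triangle inequality is applied correctly so that the offending term is replaced by $e(x,y_2)$ with an additive error depending only on $y_1$ and $y_2$ (not on the variable $x$ being supremized). Care with which slot receives which argument, and one use of symmetry, suffices.
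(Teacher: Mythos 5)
Your proof is correct and is essentially the same as the paper's: both establish the one-sided bound $f_{e,y_1}^{\ast}(x^{\ast})\leq f_{e,y_2}^{\ast}(x^{\ast})+e(y_1,y_2)$ by applying the triangle inequality $e(x,y_1)\leq e(x,y_2)+e(y_2,y_1)$ inside the supremum and then swap the roles of $y_1$ and $y_2$. The only cosmetic difference is that you spell out the use of the symmetry of $e$ (which holds by the definition of an error function), while the paper leaves it implicit.
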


\begin{proof}
Assume that $y_{1},y_{2}\in X$ and $x^{\ast}\in X^{\ast}$. Then using triangle
inequality property on $e$ we get%
\begin{align*}
f_{e,y_{1}}^{\ast}\left(  x^{\ast}\right)   &  \leq \sup_{x\in X}\left \{
\left \langle x^{\ast},x\right \rangle -f\left(  x\right)  -e\left(
x,y_{2}\right)  +e\left(  y_{2},y_{1}\right)  \right \} \\
&  =\sup_{x\in X}\left \{  \left \langle x^{\ast},x\right \rangle -f\left(
x\right)  -e\left(  x,y_{2}\right)  \right \}  +e\left(  y_{2},y_{1}\right)
=f_{e,y_{2}}^{\ast}\left(  x^{\ast}\right)  +e\left(  y_{2},y_{1}\right)  .
\end{align*}
By switching the roles of $y_{1}$ and $y_{2}$ in the above inequality, we
deduce the desired statement.
\end{proof}

In the sequel, we will provide a nontrivial example of an error function $e$,
such that $e(\cdot,y)$ is bounded from above for each $y\in X$.

\begin{example}
Define $f:%
\mathbb{R}
\rightarrow%
\mathbb{R}
$ by $f\left(  x\right)  =x\exp \left(  -x\right)  $. Then $f$ is $e$-convex
with the error function $e\left(  x,y\right)  =\left(  \exp \left(  -x\right)
-\exp \left(  -y\right)  \right)  \left(  y-x\right)  $. It is not hard to
observed that $e(\cdot,y)$ is bounded from above for each $y\in%
\mathbb{R}
$.
\end{example}

\begin{proof}
Let $x,y\in%
\mathbb{R}
$ and $t\in]0,1[$. We have%
\begin{align*}
&  f\left(  tx+\left(  1-t\right)  y\right)  -tf\left(  x\right)  -\left(
1-t\right)  f\left(  y\right) \\
&  =\left(  tx+\left(  1-t\right)  y\right)  \exp \left(  -\left(  tx+\left(
1-t\right)  y\right)  \right)  -tx\exp \left(  -x\right)  -\left(  1-t\right)
y\exp \left(  -y\right) \\
&  =tx\left[  \exp \left(  -tx-\left(  1-t\right)  y\right)  -\exp \left(
-x\right)  \right]  +\left(  1-t\right)  y\left[  \exp \left(  -tx-\left(
1-t\right)  y\right)  -\exp \left(  -y\right)  \right] \\
&  \leq tx\left[  t\exp \left(  -x\right)  +\left(  1-t\right)  \exp \left(
-y\right)  -\exp \left(  -x\right)  \right] \\
&  \  \  \  \  \  \  \  \  \  \  \  \  \  \  \  \  \  \  \  \ +\left(  1-t\right)  y\left[
t\exp \left(  -x\right)  +\left(  1-t\right)  \exp \left(  -y\right)
-\exp \left(  -y\right)  \right] \\
&  =tx\left[  \left(  1-t\right)  \exp \left(  -y\right)  -\left(  1-t\right)
\exp \left(  -x\right)  \right]  +\left(  1-t\right)  y\left[  t\exp \left(
-x\right)  -t\exp \left(  -y\right)  \right] \\
&  =t\left(  1-t\right)  \left(  \exp \left(  -x\right)  -\exp \left(
-y\right)  \right)  \left(  y-x\right)  .
\end{align*}
The above inequality follows from the convexity of function $g\left(
x\right)  =\exp \left(  -x\right)  $.
\end{proof}

\begin{proposition}
Let $e$ defined on $X\times X$ and $e(\cdot,y)$ bounded from above for each
$y\in X$ and let $k_{y}:=\sup_{x\in X}e\left(  x,y\right)  $. Then%
\[
\left \vert f_{e,y_{1}}^{\ast}\left(  x^{\ast}\right)  -f_{e,y_{2}}^{\ast
}\left(  x^{\ast}\right)  \right \vert \leq k_{y_{1},y_{2}}\  \  \  \forall
y_{1},y_{2}\in X,x^{\ast}\in X^{\ast},
\]
where $k_{y_{1},y_{2}}=\max \left \{  k_{y_{1}},k_{2}\right \}  $
\end{proposition}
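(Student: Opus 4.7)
The plan is to mimic the structure of the previous proposition (the one using the triangle inequality), but replace the use of $e(x,y_1) \leq e(x,y_2)+e(y_2,y_1)$ with the trivial bounds coming from nonnegativity and the uniform upper bound $e(\cdot,y) \leq k_y$.

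First I would fix $y_1,y_2\in X$ and $x^{\ast}\in X^{\ast}$ and estimate $f_{e,y_1}^{\ast}(x^{\ast})$ from above. For any $x\in X$, since $e$ is nonnegative and $e(x,y_2)\leq k_{y_2}$, we have the pointwise inequality
\[
-e(x,y_1) \leq k_{y_2} - e(x,y_2),
\]
(indeed $-e(x,y_1)\leq 0 \leq k_{y_2}-e(x,y_2)$ because $e(x,y_2)\leq k_{y_2}$). Adding $\langle x^{\ast},x\rangle - f(x)$ to both sides and taking the supremum over $x\in X$ gives
\[
f_{e,y_1}^{\ast}(x^{\ast}) \leq f_{e,y_2}^{\ast}(x^{\ast}) + k_{y_2}.
\]

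Next, by interchanging the roles of $y_1$ and $y_2$, the same argument yields
\[
f_{e,y_2}^{\ast}(x^{\ast}) \leq f_{e,y_1}^{\ast}(x^{\ast}) + k_{y_1}.
\]
Combining these two inequalities produces
\[
\left|f_{e,y_1}^{\ast}(x^{\ast})-f_{e,y_2}^{\ast}(x^{\ast})\right| \leq \max\{k_{y_1},k_{y_2}\} = k_{y_1,y_2},
\]
which is exactly the claim.

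There is no substantial obstacle here; the argument is a one-step sup/inf manipulation based solely on nonnegativity of $e$ and the definition of $k_y$, and no triangle inequality or any topological/regularity assumption on $f$ or $e$ is required. The only mild care is in the pointwise comparison $-e(x,y_1) \leq k_{y_2}-e(x,y_2)$, where one must explicitly invoke both $e(x,y_1)\geq 0$ and $e(x,y_2)\leq k_{y_2}$; I would write this comparison out rather than leave it implicit, as it is the only nontrivial point of the calculation.
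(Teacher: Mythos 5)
Your proof is correct and follows essentially the same route as the paper: both rest on the two facts $e(x,y_1)\geq 0$ and $e(x,y_2)\leq k_{y_2}$ to get $f_{e,y_1}^{\ast}(x^{\ast})\leq f_{e,y_2}^{\ast}(x^{\ast})+k_{y_2}$, and then switch the roles of $y_1$ and $y_2$. The only difference is cosmetic: the paper adds and subtracts $e(x,y_2)$ inside the supremum and splits it into two suprema, while you perform the same comparison pointwise before taking the supremum, which if anything is slightly cleaner.
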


\begin{proof}
Fix $y_{1},y_{2}\in X$ and $x^{\ast}\in X^{\ast}$. Then%
\begin{align*}
f_{e,y_{1}}^{\ast}\left(  x^{\ast}\right)   &  =\sup_{x\in X}\left \{
\left \langle x^{\ast},x\right \rangle -f\left(  x\right)  -e\left(
x,y_{1}\right)  +e\left(  x,y_{2}\right)  -e\left(  x,y_{2}\right)  \right \}
\\
&  \leq \sup_{x\in X}\left \{  \left \langle x^{\ast},x\right \rangle -f\left(
x\right)  -e\left(  x,y_{2}\right)  \right \}  +\sup_{x\in X}\left \{  e\left(
x,y_{2}\right)  -e\left(  x,y_{1}\right)  \right \} \\
&  \leq f_{e,y2}^{\ast}\left(  x^{\ast}\right)  +\sup_{x\in X}e\left(
x,y_{2}\right)  -\inf_{x\in X}e\left(  x,y_{1}\right) \\
&  \leq f_{e,y_{2}}^{\ast}\left(  x^{\ast}\right)  +k_{y_{2}}\leq f_{e,y_{2}%
}^{\ast}\left(  x^{\ast}\right)  +k_{y_{1},y_{2}}%
\end{align*}
The third inequality holds because $\inf_{x\in X}e\left(  x,y_{1}\right)
\geq0$. Switching the roles of $y_{1}$ and $y_{2}$ in the above inequality
allows us to obtain that the desired statement.
\end{proof}

The following result under some assumptions shows that the $(e,y)$-conjugate
and $(e,y)$-biconjugate functions are proper. In addition, it provides an
affine minorant for $f(\cdot)+e(\cdot,y)$.

\begin{theorem}
Let an error function $e$ be defined on $X\times X$ and bounded from above.
Suppose that $f:X\rightarrow \mathbb{R}\cup \{+\infty \}$ is proper and
$e(x,x)=0$ for each $x\in \operatorname*{dom}f$, also for $x_{0}\in
\operatorname*{dom}f$, $\operatorname*{int}\left(  \operatorname*{co}\left(
\operatorname*{epi}\left(  f,e,x_{0}\right)  \right)  \right)  $ is nonempty
and $f(\cdot)+e(\cdot,x_{0})$ is bounded above on a neighborhood of $x_{0}$.
If $\left(  x_{0},f\left(  x_{0}\right)  \right)  \in \operatorname*{bd}\left(
\operatorname*{co}\left(  \operatorname*{epi}\left(  f,e,x_{0}\right)
\right)  \right)  $, then for every $y\in X$, $f_{e,y}^{\ast}$, and
$f_{e,y}^{\ast \ast}$ are proper. Moreover, $f(x)+e(x,y)$ is minorized by an
affine map.
\end{theorem}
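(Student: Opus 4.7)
The plan is to produce an affine minorant of $f(\cdot)+e(\cdot,x_{0})$ by a geometric Hahn--Banach separation applied to $C:=\operatorname*{co}(\operatorname*{epi}(f,e,x_{0}))$, then to propagate this minorant to every $y\in X$ using the boundedness of $e$, and finally to read off the properness of $f_{e,y}^{\ast}$ and $f_{e,y}^{\ast\ast}$ as formal consequences. Since $e(x_{0},x_{0})=0$, the point $(x_{0},f(x_{0}))$ belongs to $\operatorname*{epi}(f+e(\cdot,x_{0}))\subset C$; by hypothesis it lies in $\operatorname*{bd}C$ while $\operatorname*{int}C\neq\emptyset$, so the geometric Hahn--Banach theorem supplies a nonzero pair $(x^{\ast},\alpha)\in X^{\ast}\times\mathbb{R}$ satisfying
\[
\langle x^{\ast},x\rangle+\alpha\lambda\geq\langle x^{\ast},x_{0}\rangle+\alpha f(x_{0})\quad\forall\,(x,\lambda)\in C.
\]

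The main obstacle I expect is the standard but delicate argument that $\alpha>0$. The case $\alpha<0$ is ruled out by letting $\lambda\to+\infty$ in the epigraph inequality. The case $\alpha=0$ is ruled out by the hypothesis that $f(\cdot)+e(\cdot,x_{0})$ is bounded above on a neighborhood $U$ of $x_{0}$: then $U\subset\operatorname*{dom}(f+e(\cdot,x_{0}))$, and the separation inequality degenerates to $\langle x^{\ast},x-x_{0}\rangle\geq 0$ for all $x\in U$, forcing $x^{\ast}=0$ and contradicting $(x^{\ast},\alpha)\neq 0$. Dividing through by $\alpha>0$ then yields the affine minorant
\[
A(x):=\Big\langle -\tfrac{x^{\ast}}{\alpha},\,x-x_{0}\Big\rangle+f(x_{0})\leq f(x)+e(x,x_{0})\quad\forall x\in X.
\]

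Since $e$ is bounded above on $X\times X$ by some $M\geq 0$ and $e(x,y)\geq 0$, for every $y\in X$ one has
\[
f(x)+e(x,y)\geq f(x)\geq f(x)+e(x,x_{0})-M\geq A(x)-M\quad\forall x\in X,
\]
which is the promised affine minorant of $f(\cdot)+e(\cdot,y)$. Setting $x_{0}^{\ast}:=-x^{\ast}/\alpha$, the affinity of $A$ gives $\langle x_{0}^{\ast},x\rangle-A(x)=\langle x_{0}^{\ast},x_{0}\rangle-f(x_{0})$ for every $x\in X$, hence
\[
f_{e,y}^{\ast}(x_{0}^{\ast})\leq\langle x_{0}^{\ast},x_{0}\rangle-f(x_{0})+M<+\infty,
\]
while for any fixed $\widetilde x\in\operatorname*{dom}f$ and arbitrary $z^{\ast}\in X^{\ast}$ we have $f_{e,y}^{\ast}(z^{\ast})\geq\langle z^{\ast},\widetilde x\rangle-f(\widetilde x)-e(\widetilde x,y)>-\infty$; so $f_{e,y}^{\ast}$ is proper. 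Finally, by Proposition \ref{properties}(ix), $f_{e,y}^{\ast\ast}(\widetilde x)\leq f(\widetilde x)+e(\widetilde x,y)<+\infty$, whereas $f_{e,y}^{\ast\ast}(x)\geq\langle x_{0}^{\ast},x\rangle-f_{e,y}^{\ast}(x_{0}^{\ast})>-\infty$ for every $x\in X$; thus $f_{e,y}^{\ast\ast}$ is proper as well.
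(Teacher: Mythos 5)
Your proof is correct, and it follows the same overall skeleton as the paper's, but with one genuine structural difference: where the paper simply cites \cite[Proposition 3.10]{Ali 2022} to obtain $\partial^{e}f(x_{0})\neq\emptyset$ and then works with an $e$-subgradient $x^{\ast}\in\partial^{e}f(x_{0})$, you unfold that citation into a self-contained supporting-hyperplane argument at $\left(x_{0},f(x_{0})\right)$ for $\operatorname*{co}\left(\operatorname*{epi}\left(f,e,x_{0}\right)\right)$; your elimination of $\alpha<0$ (let $\lambda\to+\infty$) and of $\alpha=0$ (local boundedness above forces $x^{\ast}=0$ on an absorbing neighborhood) is exactly where the geometric hypotheses enter, and the functional $x_{0}^{\ast}=-x^{\ast}/\alpha$ you produce is precisely an element of $\partial^{e}f(x_{0})$, so the two routes coincide in substance. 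Note only that this step implicitly reads $\operatorname*{epi}\left(f,e,x_{0}\right)$ as $\operatorname*{epi}\left(f+e(\cdot,x_{0})\right)$, a notation this paper never defines (it is inherited from \cite{Ali 2022}); under that reading everything checks. The second difference is in the ``moreover'' part: you obtain the affine minorant of $f(\cdot)+e(\cdot,y)$ directly from the subgradient/separation inequality plus the bound $e\leq M$, whereas the paper first proves properness of $f_{e,y}^{\ast\ast}$, invokes the fact that a proper, convex, lower semicontinuous function admits a continuous affine minorant, and then uses Proposition \ref{properties}(ix). Your route is more elementary and constructive (no appeal to the lsc-convex minorization theorem), at the cost of redoing the separation that the paper outsources; the paper's version is shorter given its reference base. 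Your verification of properness of $f_{e,y}^{\ast}$ and $f_{e,y}^{\ast\ast}$ matches the paper's argument.
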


\begin{proof}
According to \cite[Proposition 3.10]{Ali 2022}, we infer that $\partial
^{e}f(x_{0})$ is nonempty, so let $x^{\ast}\in \partial^{e}f(x_{0})$. Therefore
for all $z\in X$, we have $\left \langle x^{\ast},z-x_{0}\right \rangle \leq
f\left(  z\right)  -f\left(  x_{0}\right)  +e\left(  z,x_{0}\right)  $. From
this inequality, we get%
\begin{equation}
\left \langle x^{\ast},z\right \rangle -f\left(  z\right)  -e\left(  y,z\right)
\leq \left \langle x^{\ast},x_{0}\right \rangle -f\left(  x_{0}\right)  +e\left(
z,x_{0}\right)  -e\left(  y,z\right)  . \label{AY21}%
\end{equation}
Since $e$ is bounded from above on $X\times X$, thus there exists a real
number $M$ such that%
\begin{equation}
e\left(  x,z\right)  \leq M\  \  \  \forall \left(  x,z\right)  \in X\times X.
\label{AY22}%
\end{equation}
According to the inequalities (\ref{AY21}), (\ref{AY22}), and nonnegativity of
$e$ for all $z\in X$ we have%
\[
\left \langle x^{\ast},z\right \rangle -f\left(  z\right)  -e\left(  y,z\right)
\leq \left \langle x^{\ast},x_{0}\right \rangle -f\left(  x_{0}\right)  +M.
\]
From the left-hand side, by taking the supremum over $z\in X$, we obtain
$f_{e,y}^{\ast}(x^{\ast})<+\infty$. Therefore we get $f_{e,y}^{\ast}$ is
proper. Since $f_{e,y}^{\ast}(x^{\ast})\in \mathbb{R}$ for at least one
$x^{\ast}\in X^{\ast}$, the definition of $f_{e,y}^{\ast \ast}$ implies that
$f_{e,y}^{\ast \ast}(z)>-\infty$ for all $z\in X$. Ultimately, Proposition
\ref{properties}-(ix) shows that $f_{e,y}^{\ast \ast}$ is also proper. For
proof of the last part of the theorem, since $f_{e,y}^{\ast \ast}$ is convex,
lower semicontinuous and proper. Thus it is minorized by an affine map. Again
by using Proposition \ref{properties}-(ix), we conclude the desired result.
\end{proof}

\section{Applications in optimization}

In this section, we attempt to recover some results of optimization, similar
to Fermat's rule, in terms of the $e$-subdifferential.

The following proposition is the generalization of the result from
\cite[Theorem 4.5]{Hui-Sun} to the $e$--convex functions. Although, their
proof works for the $e$-convex functions with some modifications and
adjustments, we include proof just for the sake of completeness.

\begin{proposition}
Let $f,g:$ $X\rightarrow%
\mathbb{R}
\cup \left \{  +\infty \right \}  $ be proper and $e$-convex functions, and
$x_{0}\in \operatorname*{dom}f\cap \operatorname*{dom}g$. If $f-g$ attains its
global minimum at $x_{0}$, then $\partial^{e}g\left(  x_{0}\right)
\subset \partial^{e}f\left(  x_{0}\right)  $.
\end{proposition}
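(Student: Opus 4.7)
The plan is to argue directly from the definitions, with no machinery beyond the hypothesis of a global minimum at $x_0$ and the definition of $\partial^e$. First, I would rewrite the assumption that $f-g$ attains its global minimum at $x_0$ as the pointwise inequality
\[
g(y)-g(x_0)\leq f(y)-f(x_0)\quad \forall y\in X,
\]
obtained by simply rearranging $f(x_0)-g(x_0)\leq f(y)-g(y)$. Note that since $x_0\in \operatorname{dom}f\cap \operatorname{dom}g$, both sides are well-defined real numbers; the inequality even extends vacuously (via the convention \eqref{Convention}) to points $y$ outside the effective domains.

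Next, I would take an arbitrary $x^{\ast}\in \partial^{e}g(x_0)$ and apply its defining inequality,
\[
\langle x^{\ast},y-x_0\rangle \leq g(y)-g(x_0)+e(x_0,y)\quad \forall y\in X,
\]
then chain this with the minimum inequality above to get
\[
\langle x^{\ast},y-x_0\rangle \leq f(y)-f(x_0)+e(x_0,y)\quad \forall y\in X,
\]
which is exactly the statement that $x^{\ast}\in \partial^{e}f(x_0)$. Since $x^{\ast}$ was arbitrary, the inclusion $\partial^{e}g(x_0)\subset \partial^{e}f(x_0)$ follows.

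There is no real obstacle here: the proof is a two-line concatenation of the minimum condition with the definition of the $e$-subdifferential. The only minor point worth mentioning is that nowhere does one need to invoke the $e$-convexity of $f$ or $g$ (properness and $x_0\in \operatorname{dom}f\cap \operatorname{dom}g$ are enough), so the hypotheses could in fact be weakened; I would leave a remark to that effect after the proof. One should also take care, if $\operatorname{dom}f\ne \operatorname{dom}g$, to observe that the chained inequality remains valid in the extended-real sense used throughout the paper, which is exactly what convention \eqref{Convention} is designed to guarantee.
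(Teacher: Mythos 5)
Your proof is correct and is essentially identical to the paper's: both rearrange the global-minimum condition into $g(y)-g(x_0)\leq f(y)-f(x_0)$ and chain it with the defining inequality of $\partial^{e}g(x_0)$. Your side remark that $e$-convexity is not actually used is accurate, but the argument itself matches the paper's.
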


\begin{proof}
By assumption $f-g$ attains its global minimum at $x_{0}$. Therefore%
\begin{equation}
f\left(  x_{0}\right)  -g\left(  x_{0}\right)  \leq f\left(  x\right)
-g\left(  x\right)  \  \  \  \forall x\in X\text{.} \label{Ferm-dif}%
\end{equation}
Suppose that $x^{\ast}\in \partial^{e}g(x_{0})$. By the definition of
$e$-subdifferential and (\ref{Ferm-dif}), we infer that%
\[
\left \langle x^{\ast},x-x_{0}\right \rangle \leq g\left(  x\right)  -g\left(
x_{0}\right)  +e\left(  x,x_{0}\right)  \leq f\left(  x\right)  -f\left(
x_{0}\right)  +e\left(  x,x_{0}\right)  \  \  \  \forall x\in X\text{.}%
\]
Again, by using the definition of $e$-subdifferential, we conclude that
$x^{\ast}\in \partial^{e}f\left(  x_{0}\right)  $.
\end{proof}

\begin{corollary}
Let $f:$ $X\rightarrow%
\mathbb{R}
\cup \left \{  +\infty \right \}  $ be a proper and $e$-convex function, and
$x_{0}\in \operatorname*{dom}f$. If $f$ attains its global minimum at $x_{0}$,
then $0^{\ast}\in \partial^{e}f\left(  x_{0}\right)  $.
\end{corollary}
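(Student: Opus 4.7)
The plan is to derive this as an immediate consequence of the preceding proposition by specializing the comparison function $g$ to the zero function. More precisely, I would set $g \equiv 0$ and observe that, since the error function $e$ is nonnegative, $g$ is trivially $e$-convex: the defining inequality $0 \leq t(1-t)e(x,y)$ holds for all $x,y\in X$ and $t\in]0,1[$. Moreover, $g$ is proper with $\operatorname{dom} g = X$, so $x_0 \in \operatorname{dom} f \cap \operatorname{dom} g$ as required.

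Next I would check that $0^{\ast} \in \partial^{e} g(x_0)$. Using the definition of $e$-subdifferential applied to $g \equiv 0$, this amounts to the inequality $\langle 0^{\ast}, y - x_0 \rangle = 0 \leq g(y) - g(x_0) + e(x_0, y) = e(x_0, y)$ for every $y \in X$, which is immediate from the nonnegativity of $e$. Then, since by hypothesis $f - g = f$ attains its global minimum at $x_0$, the preceding proposition gives $\partial^{e} g(x_0) \subseteq \partial^{e} f(x_0)$, and so $0^{\ast} \in \partial^{e} f(x_0)$.

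Alternatively, and perhaps preferably for brevity, I would argue directly from the definition of $e$-subdifferential without invoking the preceding proposition: the global minimality of $f$ at $x_0$ yields $f(y) - f(x_0) \geq 0$ for all $y \in X$, and combining this with $e(x_0, y) \geq 0$ gives
\[
\langle 0^{\ast}, y - x_0 \rangle = 0 \leq f(y) - f(x_0) + e(x_0, y) \qquad \forall y \in X,
\]
which is exactly the condition $0^{\ast} \in \partial^{e} f(x_0)$. There is no real obstacle here, as the proof reduces to a single chain of inequalities once the nonnegativity of $e$ is invoked; the only thing to keep an eye on is that we interpret the $e$-subdifferential in the correct form (i.e., the one given by $\langle y-x, x^{\ast}\rangle \leq f(y) - f(x) + e(x,y)$), which is the second equivalent formulation stated in the preliminaries.
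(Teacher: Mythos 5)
Your proposal is correct and follows the paper's own route: the paper likewise proves the corollary by taking $g\equiv 0$ in the preceding proposition, and your verification that $g\equiv 0$ is proper, $e$-convex, and satisfies $0^{\ast}\in\partial^{e}g(x_{0})$ simply fills in the details the paper leaves implicit. Your alternative one-line argument directly from the definition of $\partial^{e}f$ is also valid and arguably even simpler, but it is not a genuinely different method.
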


\begin{proof}
Just take $g\equiv0$ in the above proposition.
\end{proof}

\begin{theorem}
Suppose that $f,g:$ $X\rightarrow%
\mathbb{R}
\cup \left \{  +\infty \right \}  $ are two proper and $e$--convex functions, and
$x_{0}\in \operatorname*{dom}f\cap \operatorname*{dom}g$. Assume that $f-g$
attains its local minimum at $x_{0}$, $e(\cdot,x_{0})$ is convex and
$e(x,x)=0$ for all $x\in \operatorname*{dom}f\cap \operatorname*{dom}g$. Then
$\partial^{e}g\left(  x_{0}\right)  \subset \partial^{2e}f\left(  x_{0}\right)
$.
\end{theorem}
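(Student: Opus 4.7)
The plan is to reduce the local-minimum hypothesis to a global inequality by shrinking along line segments from an arbitrary point $y$ toward $x_0$, where the local-minimum information is available. Fix $x^{\ast}\in\partial^{e}g(x_0)$; we must show that for every $y\in X$,
\[
\langle x^{\ast},y-x_0\rangle\leq f(y)-f(x_0)+2e(x_0,y).
\]
The local-minimum assumption supplies a neighborhood $U$ of $x_0$ on which $g(x)-g(x_0)\leq f(x)-f(x_0)$. For any $y\in X$, the segment point $x_t:=tx_0+(1-t)y$ lies in $U$ once $t\in(0,1)$ is sufficiently close to $1$, so the inequality can be activated there.

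The next step is to feed $x_t$ into the defining inequality of $\partial^{e}g(x_0)$ and use the local-minimum estimate to replace $g$ by $f$, obtaining
\[
(1-t)\langle x^{\ast},y-x_0\rangle=\langle x^{\ast},x_t-x_0\rangle\leq f(x_t)-f(x_0)+e(x_0,x_t).
\]
Now two upper bounds are needed for the right-hand side. First, the $e$-convexity of $f$ yields $f(x_t)-f(x_0)\leq(1-t)(f(y)-f(x_0))+t(1-t)e(x_0,y)$. Second, the hypothesis that $e(\cdot,x_0)$ is convex together with $e(x_0,x_0)=0$ and the symmetry of $e$ gives
\[
e(x_0,x_t)=e(x_t,x_0)\leq te(x_0,x_0)+(1-t)e(y,x_0)=(1-t)e(x_0,y).
\]

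Substituting both bounds and dividing by $1-t>0$ produces
\[
\langle x^{\ast},y-x_0\rangle\leq f(y)-f(x_0)+(1+t)e(x_0,y),
\]
and letting $t\to 1^{-}$ gives exactly the $\partial^{2e}f(x_0)$ inequality. The main technical point (and the reason we lose a factor of $2$ compared with the global-minimum corollary) is the interplay between the two convexity estimates: the $t(1-t)$ term from the $e$-convexity of $f$ and the $(1-t)$ term from the convexity of $e(\cdot,x_0)$ both contribute to $e(x_0,y)$ after dividing by $1-t$, and it is precisely this accumulation that forces $2e$ rather than $e$ in the conclusion. Everything else is routine: one only needs to verify that $x_t$ enters $U$ for $t$ close enough to $1$ (clear, since $x_t\to x_0$) and that the inequalities chain correctly.
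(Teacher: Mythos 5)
Your proof is correct and is essentially the paper's own argument: the paper takes $x_n=\tfrac{1}{n}x+(1-\tfrac{1}{n})x_0$ and multiplies by $n$, which is exactly your $x_t=tx_0+(1-t)y$ with $1-t=\tfrac{1}{n}$, and it uses the same three ingredients (the $e$-subdifferential inequality for $g$ plus the local-minimum estimate, the $e$-convexity of $f$, and the convexity of $e(\cdot,x_0)$ with $e(x_0,x_0)=0$) before passing to the limit. The bookkeeping that produces the factor $2e$ is identical in both versions.
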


\begin{proof}
Since $f-g$ attains a local minimum at $x_{0}$, there exists $U\in
\mathcal{N}_{x_{0}}$ such that%
\begin{equation}
f\left(  x_{0}\right)  -g\left(  x_{0}\right)  \leq f\left(  x\right)
-g\left(  x\right)  \  \  \  \forall x\in U\text{.} \label{L-Ferma}%
\end{equation}
Fix arbitrary $x\in X$ and $x^{\ast}\in \partial^{e}g(x_{0})$. Set
$x_{n}:=\frac{1}{n}x+\left(  1-\frac{1}{n}\right)  x_{0}$ where $n\in
\mathbb{N}$. So for large enough $n$, we have $x_{n}\in U$. It follows from
the definition of $e$-subdifferential, (\ref{L-Ferma}) and the $e$-convexity
of $f$ and $e\left(  \cdot,x_{0}\right)  $,
\begin{align*}
\left \langle x^{\ast},\frac{1}{n}\left(  x-x_{0}\right)  \right \rangle  &
=\left \langle x^{\ast},x_{n}-x_{0}\right \rangle \\
&  \leq g\left(  x_{n}\right)  -g\left(  x_{0}\right)  +e\left(  x_{n}%
,x_{0}\right) \\
&  \leq f\left(  x_{n}\right)  -f\left(  x_{0}\right)  +e\left(  x_{n}%
,x_{0}\right) \\
&  \leq \frac{1}{n}\left[  f\left(  x\right)  -f\left(  x_{0}\right)  \right]
+\frac{1}{n}\left(  1-\frac{1}{n}\right)  e\left(  x,x_{0}\right)  +\frac
{1}{n}e\left(  x,x_{0}\right)  .
\end{align*}
Multiply both sides of the above inequality by $n$, we get%
\begin{equation}
\left \langle x^{\ast},x-x_{0}\right \rangle \leq f\left(  x\right)  -f\left(
x_{0}\right)  +\left(  1-\frac{1}{n}\right)  e\left(  x,x_{0}\right)
+e\left(  x_{n},x_{0}\right)  . \label{AY-17}%
\end{equation}
Consequently, by tending $n\rightarrow+\infty$ in the above inequality%
\[
\left \langle x^{\ast},x-x_{0}\right \rangle \leq f\left(  x\right)  -f\left(
x_{0}\right)  +2e(x,x_{0}).
\]
Because $x\in X$ is arbitrary, we conclude that $x^{\ast}\in \partial
^{2e}f\left(  x_{0}\right)  $.
\end{proof}

\begin{corollary}
Let $f:$ $X\rightarrow%
\mathbb{R}
\cup \left \{  +\infty \right \}  $ be a proper and $e$-convex function, and
$x_{0}\in \operatorname*{dom}f$. If $f$ attains its local minimum at $x_{0}$,
then $0^{\ast}\in \partial^{2e}f\left(  x_{0}\right)  $.
\end{corollary}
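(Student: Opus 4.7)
The plan is to apply the preceding theorem with the choice $g \equiv 0$, in direct parallel with how the earlier corollary was obtained from its corresponding proposition. The routine verifications split into three small checks, after which the conclusion of the theorem yields the corollary immediately.

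First I would verify that $g \equiv 0$ meets the hypotheses of the theorem: it is proper (its domain is all of $X$, so in particular $\operatorname{dom} f \cap \operatorname{dom} g = \operatorname{dom} f \ni x_0$), and it is $e$-convex because the left-hand side of (\ref{convexineq}) is identically zero and the right-hand side is $t(1-t)e(x,y) \geq 0$. Moreover, with this choice $f - g = f$, so the assumption that $f$ attains a local minimum at $x_0$ translates exactly into the assumption that $f - g$ attains a local minimum at $x_0$. The remaining structural assumptions on $e$ (that $e(\cdot,x_0)$ is convex and $e(x,x) = 0$) are inherited from the ambient hypotheses in force for the theorem.

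Next I would check that $0^{\ast} \in \partial^{e} g(x_0)$. Using the defining inequality of the $e$-subdifferential with $g \equiv 0$, I need
\[
\langle 0^{\ast}, y - x_0 \rangle \leq g(y) - g(x_0) + e(x_0,y) \quad \forall y \in X,
\]
which reduces to $0 \leq e(x_0,y)$ and holds by nonnegativity of the error function.

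Finally, the theorem gives the inclusion $\partial^{e} g(x_0) \subset \partial^{2e} f(x_0)$, and combined with $0^{\ast} \in \partial^{e} g(x_0)$ this delivers $0^{\ast} \in \partial^{2e} f(x_0)$, which is exactly the conclusion. There is no real obstacle here: the corollary is a direct specialization of the theorem, and the only thing to be careful about is noting that the zero function is genuinely $e$-convex (not merely convex) so that the theorem applies verbatim.
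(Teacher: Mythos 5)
Your proposal is correct and is essentially identical to the paper's own argument: the paper's proof reads simply ``Just take $g\equiv 0$ in the above theorem,'' and your checks (that $g\equiv 0$ is proper and $e$-convex, that $f-g=f$, and that $0^{\ast}\in\partial^{e}g(x_{0})$ by nonnegativity of $e$) merely make that specialization explicit. The only caveat, which you already flagged, is that the theorem's hypotheses that $e(\cdot,x_{0})$ is convex and $e(x,x)=0$ are not restated in the corollary and must be taken as in force; this imprecision lies in the paper's statement itself, not in your argument.
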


\begin{proof}
Just take $g\equiv0$ in the above theorem.
\end{proof}

In the next result, for the finite-dimensional cases and under some
assumptions, we show that an $e$-convex function has a Fr\'{e}chet derivative
if and only if it has a Gateaux derivative.

\begin{proposition}
Let $f:%
\mathbb{R}
^{n}\rightarrow \mathbb{R}\cup \{+\infty \}$ be a proper and $e$-convex function,
and $f$ is locally bounded above on the $\operatorname*{int}%
(\operatorname*{dom}f)$. If $e|_{\operatorname*{dom}f\times \operatorname*{dom}%
f}$ is locally bounded and $x_{0}\in \operatorname*{int}\left(
\operatorname*{dom}f\right)  $, then the following statements are equivalent:

(i) $f$ is Fr\'{e}chet differentiable at $x_{0}$;

(ii) $f$ is G\^{a}teaux differentiable at $x_{0}$.
\end{proposition}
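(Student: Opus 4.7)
The direction (i)$\implies$(ii) is immediate from the definitions, since Fréchet differentiability plainly implies Gâteaux differentiability. The real content is (ii)$\implies$(i), and my plan exploits two facts: the local Lipschitz property for $e$-convex functions and the compactness of the unit sphere in the finite-dimensional space $\mathbb{R}^n$.

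First I would invoke \cite[Theorem 4.2]{Ali 2022}, which the excerpt already flags as applicable: under the present hypotheses ($f$ proper and $e$-convex, $f$ locally bounded above on $\operatorname{int}(\operatorname{dom} f)$, and $e$ locally bounded on $\operatorname{dom} f\times \operatorname{dom} f$), $f$ is locally Lipschitz on $\operatorname{int}(\operatorname{dom} f)$. So there exist $r>0$ and $L>0$ with the closed ball $\overline{B}(x_{0},r)\subset \operatorname{int}(\operatorname{dom} f)$ and
\[
|f(u)-f(v)|\leq L\|u-v\|\qquad\forall u,v\in \overline{B}(x_{0},r).
\]
Let $A=\nabla f(x_{0})$ denote the Gâteaux derivative at $x_{0}$.

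Next, to prove Fréchet differentiability, I would show that for every sequence $h_{n}\to 0$ in $\mathbb{R}^n$ with $h_{n}\ne 0$,
\[
R_{n}:=\frac{f(x_{0}+h_{n})-f(x_{0})-\langle A,h_{n}\rangle}{\|h_{n}\|}\longrightarrow 0.
\]
Write $t_{n}=\|h_{n}\|$ and $u_{n}=h_{n}/t_{n}$, so $u_{n}\in S^{n-1}$. By the compactness of $S^{n-1}$, it suffices to prove that every subsequence admits a further sub-subsequence along which $R_{n}\to 0$; so, extracting once, we may assume $u_{n}\to u$ for some unit vector $u$. I would then split
\[
R_{n}=\underbrace{\frac{f(x_{0}+t_{n}u_{n})-f(x_{0}+t_{n}u)}{t_{n}}}_{(\mathrm{I})}+\underbrace{\frac{f(x_{0}+t_{n}u)-f(x_{0})-\langle A,t_{n}u\rangle}{t_{n}}}_{(\mathrm{II})}+\underbrace{\langle A,u-u_{n}\rangle}_{(\mathrm{III})}.
\]
Term (I) is bounded in absolute value by $L\|u_{n}-u\|\to 0$ thanks to the local Lipschitz estimate applied inside $\overline{B}(x_{0},r)$ (which contains $x_{0}+t_{n}u_{n}$ and $x_{0}+t_{n}u$ for $n$ large); term (II) tends to $0$ by the Gâteaux differentiability of $f$ in the direction $u$; and term (III) tends to $0$ by the continuity of the linear functional $A$. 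Hence $R_{n}\to 0$, establishing Fréchet differentiability.

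The only mildly delicate point is the compactness step: the argument genuinely uses that $\mathbb{R}^{n}$ is finite-dimensional (this is precisely why the proposition is stated in $\mathbb{R}^n$ rather than a general locally convex space), since in infinite-dimensional spaces the unit sphere is not compact and a direct Gâteaux-to-Fréchet passage fails even for convex functions. Apart from verifying that the assumptions match those of \cite[Theorem 4.2]{Ali 2022} so that local Lipschitzness is indeed available, the remainder of the argument is a routine triangle-inequality estimate, with no need to touch the error function $e$ directly in the final decomposition.
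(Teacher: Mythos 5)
Your proposal is correct, and its first (and decisive) step is exactly the paper's: verify that the hypotheses here match those of \cite[Theorem 4.2]{Ali 2022}, so that $f$ is locally Lipschitz on $\operatorname*{int}(\operatorname*{dom}f)$, in particular near $x_{0}$. Where you diverge is in the second step: the paper simply cites \cite[Proposition 3.2]{Mordu-Nam}, which states that for a function locally Lipschitz at a point of $\mathbb{R}^{n}$, G\^{a}teaux and Fr\'{e}chet differentiability at that point coincide, whereas you prove this fact from scratch via the compactness of the unit sphere: normalize $h_{n}=t_{n}u_{n}$, pass to a subsequence with $u_{n}\rightarrow u$, and split the remainder quotient into a Lipschitz term controlled by $L\left\Vert u_{n}-u\right\Vert $, a term handled by G\^{a}teaux differentiability in the single direction $u$, and a term handled by continuity of the linear map. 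That decomposition, combined with the standard subsequence--sub-subsequence reduction, is a valid and self-contained proof of the cited equivalence, and you correctly identify that finite dimensionality (compactness of $S^{n-1}$) is what makes the passage work and why the proposition is stated in $\mathbb{R}^{n}$. In short: same reduction to local Lipschitzness, but your argument replaces the external reference by an elementary proof, which makes the exposition longer yet fully transparent; the paper's version is shorter at the cost of an additional citation.
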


\begin{proof}
Since all the conditions of \cite[Theorem 4.2]{Ali 2022}\ hold. Thus $f$ is
locally Lipschitz on the interior of its domain, and especially it is locally
Lipschitz at $x_{0}$. Now it follows from \cite[Proposition 3.2]{Mordu-Nam}
that the statements (i) and (ii) are equivalent.
\end{proof}

We recall that if $f,g:X\rightarrow%
\mathbb{R}
\cup \left \{  \pm \infty \right \}  $ are two functions, then their infimal
convolution is a function from $X$ to $%
\mathbb{R}
\cup \left \{  \pm \infty \right \}  $\ defined by%
\[
\left(  f\square g\right)  \left(  x\right)  :=\inf \left \{  f\left(
x_{1}\right)  +g\left(  x_{2}\right)  :x=x_{1}+x_{2}\right \}  .
\]
In the following theorem, under some assumptions, we show that the $\left(
e,y\right)  $-conjugate of the sum of two functions equals the infimal
convolution of their $\left(  e,y\right)  $-conjugates (see also \cite{NLM 97}).

\begin{theorem}
Let $f:X\rightarrow \mathbb{R}\cup \{+\infty \}$ be $e$-convex such that
$e(x,x)=0$ for all $x\in \operatorname*{dom}f$ and $g:X\rightarrow
\mathbb{R}\cup \{+\infty \}$ be $e^{\prime}$-convex such that $e^{\prime
}(z,z)=0$ for each $z\in \operatorname*{dom}g$. Suppose that $f$ and $g$ are
locally bounded above on $\operatorname*{int}\left(  \operatorname*{dom}%
f\right)  $ and $\operatorname*{int}\left(  \operatorname*{dom}g\right)  $,
respectively. Assume that $e(\cdot,x)|_{\operatorname*{dom}f}$ and $e^{\prime
}(\cdot,z)|_{\operatorname*{dom}g}$ are upper semicontinuous for all
$x\in \operatorname*{dom}f$ and $z\in \operatorname*{dom}g$. Then

(i) for every $y\in X$ and $x^{\ast}\in X^{\ast}$ one has%
\begin{equation}
\left(  f+g\right)  _{e+e^{\prime},y}^{\ast}\left(  x^{\ast}\right)
\leq \left(  f_{e,y}^{\ast}\left(  \cdot \right)  \square g_{e^{\prime},y}%
^{\ast}\left(  \cdot \right)  \right)  \left(  x^{\ast}\right)  ;
\label{inf-conv}%
\end{equation}

(ii) if $e|_{\operatorname*{dom}f\times \operatorname*{dom}f}$\ and $e^{\prime
}|_{\operatorname*{dom}g\times \operatorname*{dom}g}$ are locally bounded,
$y\in \operatorname*{int}\left(  \operatorname*{dom}f\right)  \cap
\operatorname*{int}\left(  \operatorname*{dom}g\right)  $ and it is a local
minimum point of the function $f+g-\left \langle x^{\ast},\cdot \right \rangle $,
then the equality holds.
\end{theorem}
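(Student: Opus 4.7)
For part (i), the bound follows directly from the definitions: for every $z\in X$ and every decomposition $x^{\ast}=x_{1}^{\ast}+x_{2}^{\ast}$,
\[
[\langle x_{1}^{\ast},z\rangle-f(z)-e(z,y)]+[\langle x_{2}^{\ast},z\rangle-g(z)-e^{\prime}(z,y)]\leq f_{e,y}^{\ast}(x_{1}^{\ast})+g_{e^{\prime},y}^{\ast}(x_{2}^{\ast}),
\]
and the left-hand side equals $\langle x^{\ast},z\rangle-(f+g)(z)-(e+e^{\prime})(z,y)$. Taking the supremum over $z$ and then the infimum over all admissible decompositions yields (\ref{inf-conv}).

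For part (ii), the strategy is to exhibit an explicit decomposition $x^{\ast}=x_{1}^{\ast}+x_{2}^{\ast}$ with $x_{1}^{\ast}\in\partial^{e}f(y)$ and $x_{2}^{\ast}\in\partial^{e^{\prime}}g(y)$, and then to invoke Proposition \ref{19} twice. The local boundedness hypotheses on $e$ and $e^{\prime}$, together with the local boundedness above of $f,g$ and $y\in\operatorname*{int}(\operatorname*{dom}f)\cap\operatorname*{int}(\operatorname*{dom}g)$, let me apply \cite[Theorem 4.2]{Ali 2022} to conclude that both $f$ and $g$ are locally Lipschitz at $y$. Consequently $h:=f+g-\langle x^{\ast},\cdot\rangle$ is locally Lipschitz at $y$, and since $y$ is a local minimum of $h$, Fermat's rule for the Clarke--Rockafellar subdifferential gives $0^{\ast}\in\partial^{CR}h(y)$. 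The Clarke--Rockafellar sum rule for locally Lipschitz summands then produces $x^{\ast}=x_{1}^{\ast}+x_{2}^{\ast}$ with $x_{1}^{\ast}\in\partial^{CR}f(y)$ and $x_{2}^{\ast}\in\partial^{CR}g(y)$, and by \cite[Theorem 3.5]{Ali 2022} (already used in the proof of Proposition \ref{properties}(x)) these upgrade to $x_{1}^{\ast}\in\partial^{e}f(y)$ and $x_{2}^{\ast}\in\partial^{e^{\prime}}g(y)$.

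With this decomposition in hand, Proposition \ref{19} applied to $f$ and to $g$, together with $e(y,y)=e^{\prime}(y,y)=0$, delivers $f_{e,y}^{\ast}(x_{1}^{\ast})+g_{e^{\prime},y}^{\ast}(x_{2}^{\ast})=\langle x^{\ast},y\rangle-f(y)-g(y)$. On the other hand, substituting $z=y$ in the definition of the left-hand side of (\ref{inf-conv}) supplies the reverse inequality $(f+g)_{e+e^{\prime},y}^{\ast}(x^{\ast})\geq\langle x^{\ast},y\rangle-f(y)-g(y)$. Combining these, $(f_{e,y}^{\ast}\square g_{e^{\prime},y}^{\ast})(x^{\ast})\leq f_{e,y}^{\ast}(x_{1}^{\ast})+g_{e^{\prime},y}^{\ast}(x_{2}^{\ast})\leq(f+g)_{e+e^{\prime},y}^{\ast}(x^{\ast})$, which together with (i) forces equality.

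The main obstacle I foresee is justifying the Clarke--Rockafellar sum rule in a general separated locally convex space, since the textbook statements are typically formulated for normed or Banach settings; passing to a suitable seminorm on a neighborhood of $y$ via the local Lipschitz property should handle this, but the reduction has to be carried out carefully. If that sum rule proves unavailable at the required generality, the fallback is a convex-combination argument in the spirit of the preceding theorem on local minima: setting $z_{t}:=y+t(z-y)$ in the local-minimum inequality and letting $t\searrow 0^{+}$, the $e$- and $e^{\prime}$-convexity of $f$ and $g$ directly yield $(f+g)_{e+e^{\prime},y}^{\ast}(x^{\ast})=\langle x^{\ast},y\rangle-f(y)-g(y)$, after which the splitting of $x^{\ast}$ can be treated by a Fenchel--Moreau-type duality argument applied to the shifted functions $f+e(\cdot,y)$ and $g+e^{\prime}(\cdot,y)$.
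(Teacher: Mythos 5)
Your proposal is correct and follows essentially the same route as the paper: part (i) by definition-chasing over decompositions $x^{\ast}=x_{1}^{\ast}+x_{2}^{\ast}$, and part (ii) by combining \cite[Theorem 4.2]{Ali 2022} for local Lipschitzness, the Clarke--Rockafellar Fermat and sum rules, \cite[Theorem 3.5]{Ali 2022}, and Proposition \ref{19} to split $x^{\ast}$ into $x_{1}^{\ast}\in\partial^{e}f(y)$, $x_{2}^{\ast}\in\partial^{e^{\prime}}g(y)$ and evaluate the conjugates at $y$. Your only (harmless) deviation is replacing the paper's third application of Proposition \ref{19} to the $(e+e^{\prime})$-convex sum $f+g$ by the trivial lower bound obtained by taking $z=y$ in the definition of $\left(f+g\right)_{e+e^{\prime},y}^{\ast}\left(x^{\ast}\right)$ (using $e(y,y)=e^{\prime}(y,y)=0$), and your caveat about the sum rule beyond the Banach setting applies equally to the paper's own citation of \cite[Proposition 2.3.3]{Clarke}.
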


\begin{proof}
To prove (i), let $x^{\ast}\in X^{\ast}$ and $x\in X$. Choose $x_{1}^{\ast
},x_{2}^{\ast}\in X^{\ast}$ such that $x_{1}^{\ast}+x_{2}^{\ast}=x^{\ast}$.
Then by the definition of $(e,y)$-conjugate, we have%
\[
f_{e,y}^{\ast}\left(  x_{1}^{\ast}\right)  \geq \left \langle x_{1}^{\ast
},x\right \rangle -f\left(  x\right)  -e\left(  x,y\right)  ,
\]
and%
\[
g_{e^{\prime},y}^{\ast}\left(  x_{2}^{\ast}\right)  \geq \left \langle
x_{2}^{\ast},x\right \rangle -g\left(  x\right)  -e^{\prime}\left(  x,y\right)
.
\]
Adding the above inequalities, we find%
\[
f_{e,y}^{\ast}\left(  x_{1}^{\ast}\right)  +g_{e^{\prime},y}^{\ast}\left(
x_{2}^{\ast}\right)  \geq \left \langle x^{\ast},x\right \rangle -\left(
f+g\right)  \left(  x\right)  -\left(  e+e^{\prime}\right)  \left(
x,y\right)  .
\]
By taking the infimum of the left-hand side over all pairs $(x_{1}^{\ast
},x_{2}^{\ast})$ such that $x_{1}^{\ast}+x_{2}^{\ast}=x^{\ast}$ and by taking
the supremum of the right-hand side over $x$ and, we conclude (\ref{inf-conv}%
).\newline To show (ii), since by assumption $y\in \operatorname*{int}\left(
\operatorname*{dom}f\right)  $ is a local minimum point of the function
$f+g-\left \langle x^{\ast},\cdot \right \rangle $. So using \cite[Theorem
4.2]{Ali 2022}, we infer that $f$ and $g$ are locally Lipschitz on the
interior of their domains. Therefore using the sum rule for the
Clark-Rockafellar subdifferentials \cite[Proposition 2.3.3]{Clarke}, implies
that%
\[
0^{\ast}\in \partial^{CR}\left(  f+g-\left \langle x^{\ast},\cdot \right \rangle
\right)  \left(  y\right)  \subset \partial^{CR}f\left(  y\right)
+\partial^{CR}g\left(  y\right)  -x^{\ast}\text{.}%
\]
Applying \cite[Theorem 3.5]{Ali 2022}, we obtain%
\[
x^{\ast}\in \partial^{CR}f\left(  y\right)  +\partial^{CR}g\left(  y\right)
\subset \partial^{e}f\left(  y\right)  +\partial^{e^{\prime}}g\left(  y\right)
.
\]
Thus there exist $x_{1}^{\ast}\in \partial^{CR}f(y)\subseteq$ $\partial
^{e}f(y)$ and $x_{2}^{\ast}\in \partial^{CR}g(y)\subseteq$ $\partial
^{e^{\prime}}g(y)$ such that $x^{\ast}=x_{1}^{\ast}+x_{2}^{\ast}$. According
to Proposition \ref{19}, we deduce that%
\[
f_{e,y}^{\ast}\left(  x_{1}^{\ast}\right)  =\left \langle x_{1}^{\ast
},y\right \rangle -f\left(  y\right)  ,\qquad g_{e^{\prime},y}^{\ast}\left(
x_{2}^{\ast}\right)  =\left \langle x_{2}^{\ast},y\right \rangle -g\left(
y\right)
\]
and since $f+g$ is an $(e+e^{\prime})$-convex \cite[Proposition 2.4(d)]{Ali
2022}, so using again Proposition \ref{19}, we get%
\[
\left(  f+g\right)  _{e+e^{\prime},y}^{\ast}\left(  x^{\ast}\right)
=\left \langle x^{\ast},y\right \rangle -f\left(  y\right)  -g\left(  y\right)
.
\]
Hence,%
\[
\left(  f_{e,y}^{\ast}\left(  \cdot \right)  \square g_{e^{\prime},y}^{\ast
}\left(  \cdot \right)  \right)  \left(  x^{\ast}\right)  \leq \left(
f+g\right)  _{e+e^{\prime},y}^{\ast}\left(  x^{\ast}\right)  .
\]
It follows that $\left(  f+g\right)  _{e+e^{\prime},y}^{\ast}\left(  x^{\ast
}\right)  =\left(  f_{e,y}^{\ast}\left(  \cdot \right)  \square g_{e^{\prime
},y}^{\ast}\left(  \cdot \right)  \right)  \left(  x^{\ast}\right)  $.
\end{proof}

\bigskip

\textbf{Acknowledgments:} The authors express their gratitude to Professor
Zalinescu for his many insightful comments on the first version of the manuscript.

\bigskip

\textbf{Data Availability: }Data sharing is not applicable to this article as
no datasets were generated or analyzed during the current study.

\bigskip

\textbf{Conflict of interest: }The authors declare that they have no conflict
of interest.

\bigskip

\textbf{Author's contribution}: The author's contributions to the manuscript
are equal.

\bigskip

\end{document}